\documentclass[12pt]{amsart}

\oddsidemargin=0.5cm
\evensidemargin=0.5cm
\textwidth=16cm
\textheight=21cm

\def\ann{\operatorname{ann}}
\def\mod{\operatorname{mod}}
\def\m{\mathfrak{m}}
\def\n{\mathfrak{n}}
\def\pa{\partial}

\usepackage{color}
\definecolor{red}{rgb}{1.00,0.00,0.00}

\newtheorem{theorem}{Theorem}[section]
\newtheorem{lemma}[theorem]{Lemma}
\newtheorem{corollary}[theorem]{Corollary}
\newtheorem{proposition}[theorem]{Proposition}
\newtheorem{example}[theorem]{Example}
\newtheorem{remark}[theorem]{Remark}

\newtheorem{notation}[theorem]{Notation}

\begin{document}

\title[On the number of generators of ideals defining Gorenstein Artin Algebras]{On the number of generators of  ideals defining Gorenstein Artin  
algebras with Hilbert function $ (1,n+1 , 1+{n+1\choose 2}, \ldots,{n+1\choose 2}+1, n+1 ,1)$}

\author{Sabine El Khoury}
\address{Department of Mathematics, American University of Beirut,
Beirut, Lebanon}
\email{se24@aub.edu.lb}
\author{A. V. Jayanthan}
\address{Department of Mathematics, I.I.T. Madras, Chennai, Indian -
600036.}
\email{jayanav@iitm.ac.in}
\author{Hema Srinivasan}
\address{Department of Mathematics, University of Missouri, Columbia,
Missouri, USA}
\email{srinivasanh@missouri.edu}

\begin{abstract}
Let $R = k[w, x_1, \dots, x_n]/I$  be a graded Gorenstein Artin
algebra .  Then $I  = \ann F$ for some $F$ in the divided power
algebra $k_{DP}[W, X_1,\ldots, X_n]$.  If $RI_2$ is a height one ideal
generated by $n$ quadrics, then $I_2 \subset (w)$ after a possible
change of variables.  Let $J = I \cap k[x_1, \dots , x_n]$. Then $\mu(I) \le \mu(J)+n+1$ and $I$
is said to be generic if $\mu(I) = \mu(J) + n+1$. In this article we
prove necessary conditions, in terms of $F$, for an ideal to be
generic.  With some extra assumptions on the exponents of terms of
$F$, we obtain a characterization for $I = \ann F$ to be generic in
codimension four. 
\end{abstract}

\maketitle

\section*{Introduction}
Let $R = k[x_1,\ldots, x_r]$ and $I$ be a homogeneous ideal of $R$.
Let $A = R/I$ be an Artinian Gorenstein quotient of $R$. If $r = 2$,
then it is known that $I$ is a complete intersection. If $r = 3$, then by
Buchsbaum and Eisenbud structure theorem \cite{BE},  $I$ is the
$(2n)^{ {th}}$ order pfaffians of a $(2n+1)^{{th}}$ order skew
symmetric matrix. When $r = 4$, Kustin and Miller gave a structure
theorem for Gorenstein Artinian ideals of the form $(f,g,h, x_4J)$,
where $J$ is a Gorenstein ideal of height three. Let $I =
\oplus_{n\geq 1}I_n$ be the direct sum decomposition of a Gorenstein
Artinian ideal in $k[x,y,z,w]$.  In \cite{is}, Iarrobino and
Srinivasan studied several properties of the Gorenstein ideals $I$
such that $I_2 \cong \langle wx, wy, wz \rangle$ or $I_2 \cong \langle
wx, wy, w^2 \rangle$. They gave a structure theorem for ideals $I$
with $I_2 = \langle wx, wy, wz \rangle$ and Hilbert function $H_{R/I}
= (1,4,7, \ldots, 1)$. They did this by establishing a connection
between some properties of the ideal $I$ and $J = I \cap R',$ where
$R' = k[x,y,z]$, which is a height three Gorenstein ideal.  When $I_2
= \langle wx, wy, w^2 \rangle$, they named these algebras
\textit{mysterious Gorenstein algebras} and studied their various
properties. They showed that such an ideal can be obtained as an
annihilator of a homogeneous form $F = G(X,Y,Z) + WZ^{[j-1]} \in
k_{DP}[X,Y,Z,W]$, where $k_{DP}[X,Y,Z,W]$ denotes the divided power
algebra. They studied the Hilbert function properties of $R/I$
connecting it with those of $R'/J$.

In \cite{E-S}, El Khoury and Srinivasan studied certain properties of
Gorenstein Artinian algebras of the form $R/I$, where $I_2 = \langle wx,
wy, w^2 \rangle$. They gave a structure theorem for such ideals. They
showed that $I$ is generated by $I_2$, $J = I \cap R'$ and an
element of the form $wz^\beta - g$, where $g \in R' \notin I$.
%Moreover, $z^j$ is a minimal generator for $J$.

It can easily be seen that $gx, gy \in J$. El Khoury and Srinivasan
proved that  \\ $\left\{ wx, wy, w^2, \alpha_1, \ldots, \alpha_{n-1},
z^j, wz^\beta - g \right\}$ is a minimal generating set for $I$ unless
$gx$ or $gy$ is already a minimal generator of $J = (\alpha_1,
\ldots, \alpha_{n-1}, z^j)$. In that case, dropping each of $gx$
and $gy$ that is a minimal generator for $J$, the remaining $n+3$ or
$n+2$ elements minimally generate $I$. They obtained a minimal free
resolution of $R/I$ in all three cases where neither $gx$ nor $gy$
is a part of a minimal generating set for $J$, either one of them is
a minimal generator and both are minimal generators for $J.$ They
also studied a special case of these ideals namely, $F =
X^{[a]}H(Y,Z) + WZ^{[j-1]}$. In that case, they described completely
the minimal free resolution of $R/I$. They concluded their article
with an interesting question on classification of ideals $I$ with
$n+2, ~ n+3$ and $n+4$ number of generators where $n$ denotes the
minimum number of generators of $J$.

In this article, we consider the problem in higher embedding
dimensions and partially answer this question.   

Let $R=k[w=x_0, x_1, \dots x_n] $ be a standard graded ring of
dimension $n+1$ and $I$ a homogeneous Gorenstein ideal of height $n+1$
with $H ( R/I) = ( 1, n+1, 1+{n+1\choose 2}, \ldots , {n+1\choose
2}+1, n+1, 1) $. Suppose the ideal $RI_2$ has height one.   Then after
a possible change of variables, $I$ equals $\ann F$, where $F = G(X_1, \ldots,
X_n) + W^{[j]}$ or $F = G(X_1, \ldots X_n) +WX_n^{[j-1]}$ under
Macaulay equivalence. 

Let  $J = I \cap k[x_1, \ldots x_n]= I\cap R'$ so that $R = R'[w]$.
If $F = G(X_1, \ldots X_n) + W^{[j]}$, then $J$ is Gorenstein of
height $n$ and  $I$ is minimally generated by $J, wx_i, 1 \leq i \leq n-1,
w^{\beta}-g$
for some $g\in R'$ and not in $I$.   If $F =  G(X_1, \ldots X_n) +
WX_n^{[j-1]}$,  then $I$ is   generated by $J,wx_i, 1\le i\le n-1, w^2
, wx_n^{\beta} -g$ for some $\beta \le j-1$ and $g \in R' \backslash J$.
In the generic case, all of these are minimal generators, that is
$gx_i$ is not a minimal generator of $J$ for any $i$.  Thus, if $G$ is
sufficiently general, $\mu (I) = \mu (J) + n+1$ and we say $G$ or $I=
\ann (G+ WZ^{[j-1]})$ is {\it  generic}.  Since there is a one-to-one
correspondence between the height four Gorenstein ideals $I$ with the
property that $I_2 \cong (wx_i, 1\le i \le n-1,w^2)$ and homogeneous
forms $F = G(X_1, \ldots X_n) + WZ^{[j-1]} \in k_{DP}[W,X_1, \ldots
X_{n-1},X_n= Z]$, classifying such ideals is equivalent to classifying
these homogeneous forms in the divided power algebra. Therefore, we
try to classify the property of $I$ being generic in terms of certain
properties of the homogeneous form $F$ such that $I = \ann F$.  

In Section $1$, we begin by comparing $I = \ann(G + WZ^{[j]})$ and
$I_{x_i} = \ann(\frac{\partial{G}}{\partial X_i} + WZ^{[j-1]})$ in
general and show that when  $X_i$ divides $G$, $I$ can be generic
only if  $I_{x_i}$ is generic.  When $n= 3$, for Gorenstein Artinian
algebras, we have more specific results.  We prove that if $I =
\ann(F)$ is generic, then there are some relations among the $X, ~Y$
and $Z$-degrees of $F$.

In Section $2$, we study the case when $F = G(Y,Z) + X^{[a]}G_1 +
WZ^{[j-1]}$. When $a > j/2$ we prove that $I$ is generic if
and only if $\max \deg_Y (G) = \max \deg_Y (G_1)$. In the next
section, we discuss the general case, where $F = G(Y,Z) +
X^{[a_1]}G_1(Y,Z) + \cdots + X^{[a_n]}G_n(Y,Z) + WZ^{[j-1]}$. Assuming
that $j-a_n < a_1 < \cdots < a_n$, we obtain necessary and sufficient
conditions for $I$ to  be generic.     

We conclude our article by comparing the Hilbert functions of $R/I$
and $R'/J$. As a consequence, we show that certain classes of
Cohen-Macaulay height three ideals in $k[x,y,z]$ are unimodal, even
though they are not Gorenstein.

\section{Genericity}

Let $R$ be a standard graded $k$ algebra of dimension $n+1$ and $I$ be
a graded Gorenstein ideal of height $n+1$ such that $H(R/I) = ( 1,
n+1,1+ {n+1\choose2}, \ldots , n+1, 1)$.  Then the ideal $I$ has
$n$ quadrics amongst its minimal generators.  Suppose these quadrics
generate an ideal $I_2$ of height one, then there exists a one form
$w$ such that $I_2\subset (w)$ and $R = k[w,x_1, \ldots x_n]$ for some
suitable one form $x_i$. Let $R'= k[x_1, \ldots, x_n]$ and $J = I\cap
R'$.  

We will make use of the Macaulay equivalence between graded
Gorenstein ideals of socle degree $j$ in $R=k[w,x_1, \ldots x_n]$ and
$j$-forms in the divided power algebra $R^*=k_{DP}[X_1, \ldots, X_n]$
by the action of $R$ on $R^*$ by differentiation. If $F\in R^*$, then
$\ann F= \{ f\in R ~| ~ (\partial /\partial f)(F) = 0\}$.  

The multiplication in the divided power algebra
is different from the usual polynomial algebra : 
$X^{[a]}\cdot X^{[b]} = \frac{(a+b)!}{a!b!} X^{[a+b]}$.
%For the general rules, we refer the reader to Appendix A2 of \cite{e}.

In our situation, $I = \ann F$, where $F = G+W^{[j]}$ or 
$G+WX_n^{[j-1]}$ depending on whether $w^2\notin I$ or $w^2 \in I$.  
When $G$ is generic among $R[X_1, \ldots X_n]$, $\mu (I ) = \mu(J)+n+1$.
We say that $F$ is {\it generic mod W} and the corresponding ideal  
$ I = \ann F$ is {\it generic} if $\mu(I) = \mu(J) + n+1$.

If $w^2\notin I$, then we can see that $I_2 = (wx_i, 1\le i \le n)$
and $R'/J$ is a Gorenstein Artin algebra of embedding dimension $n$.
The minimal number of generators for $I$, $\mu (I)$  is precisely
$n+1$ more than that of $J$ for $I = (J, wx_1, \ldots , wx_n, w^j- g)$
for some $g\in R', g\notin J$.   Thus $\mu(I) = \mu(J)+n+1$ and in
this case, $I$ is always {\it generic} in our sense. 

From now on, we consider the case where $w^2\in I$.  Then $w^2$ is one
of the quadrics minimally generating $I$ and for a suitable choice of
one forms $x_1, \ldots x_{n-1}, x_n=z$,   $I_2\cong  (w^2, wx_1,
\ldots wx_{ n-1 })$.  
 %of characteristic zero
%{\color {red} Why Characteristic zero?}
%{\color{red}  $I_2 = ( wx,wy,w^2)?$ and not $\cong$? } 
There exists a unique $F = G(X_1, \ldots, X_{n-1}, Z) + WZ^{[j-1]}$ of
degree $j$ in the divided power algebra such that $I = \ann F$. It has
been proved in \cite{E-S} that $I$ is generated by $(I_2, J,
wz^{\beta}-g)$ with $J = I \cap k[x_1, \ldots, x_n=z]$, $g \in
k[x_1,\ldots, x_{n-1},z]~ \backslash~ J$ and $\beta \leq j-1$. Suppose
$J$ is minimally generated by $\alpha_1, \ldots, \alpha_n = z^j$.
Since $wx_i$ are in $I$, $gx_i, 1\le i\le n-1$ belong to $J$.
Therefore, if $gx_i  \in \mathfrak{n} J$, where $\mathfrak{n} =
(x_1,\ldots x_{n-1}, z) \subset k[x_1, \ldots, x_{n-1}, z ]$, then
they are not part of any minimal generating set for $J$ and hence
$\mu(I) = \mu(J) + n+1$.  Depending on whether $gx_i$'s   are part of
a minimal generating set for $J$, the number of minimal generators for
$I$ will be $\mu(J) + t$ for $2\le t\le n+1$. We summarize this as:
\begin{remark}
If $A=R/I$ is a Gorenstein Artin algebra with Hilbert function $(1,
n+1, 1+{n+1\choose 2}, \ldots , n+1, 1)$, then after a linear change
of variables, $R = k[x_1, \ldots, x_n, w]$ and there exists a unique
minimal generator for I of the form $wx_n^t - g(x_1, \ldots x_n)$.  If
$J = I\cap k[x_1, \ldots , x_n]$, then $\mu(I) = \mu(J)+n+1-r$, where
$r$ is the cardinality of $\{i~|~ gx_i \in J/\m J\}$.
\end{remark}

\vskip 2mm
\noindent
%We say
%that an ideal $I$ is \textit{good} if $I$ is minimally generated by
%$\{wx, wy, w^2, \alpha_1, \ldots, \alpha_n, wz^\beta - g\}$. 
Thus, $I$ is generic if and only if $gx_i \in (x_1, \ldots x_n)J$ for
all $1\le i\le n-1$.  The purpose of this paper is to classify these
polynomials $F$ that give rise to generic ideals $I$.
 
We begin by proving a result which helps us restrict our study to a
simpler class of polynomials.  Let $F = G(X_1, \ldots, X_{n-1},
Z)+WZ^{[j-1]}$ and $I = \ann F$. Without loss of generality, we may
also assume that $G$ does not have a term in $LZ^{j-1}$ where $L$ is a
linear form for in that case we can replace $W$ by $(W+L)$.   In what
follows we will take $I = \ann F$ with $F$ as above.   For any form $F
\in k_{DP}[X_1, \dots, X_{n-1}, Z, W]$, $F_X$ denotes the partial with
respect to $X$. 

\begin{theorem}\label{restrict}
Suppose $F' = X_tG+ WZ^{[j]}$  for some $t$, $1\le t\le n-1$.   If
$\ann F$ is generic, then so is $\ann F'$.
\end{theorem}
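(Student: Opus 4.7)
Plan. The computation rests on the product rule in the divided power algebra: for any $f\in R'=k[x_1,\ldots,x_{n-1},z]$ acting by contraction on $R^*$,
\[
f\cdot(X_tG)\;=\;\frac{\partial f}{\partial x_t}\cdot G\;+\;X_t\cdot(f\cdot G).
\]
This formula, which follows from $x_t\cdot(X_tH)=H+X_t\cdot(x_tH)$ in $R^*$, is the bridge between $\ann F$ and $\ann F'$.

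First I would use the formula to fix the shape of $I'=\ann F'$. One checks directly that $w^2,wx_1,\ldots,wx_{n-1}\in I'$ while $wz\notin I'$, so $I'_2=I_2$; the structure theorem then gives $I'=(I'_2,J',wz^{\beta'}-g')$ with $J'=I'\cap R'$ and $g'\in R'\setminus J'$. Because the $X_t$-free component of $f\cdot X_tG$ equals that of $(\partial f/\partial x_t)\cdot G$, which lies in $R'\cdot G$, the highest pure-$Z$ exponent reachable in $R'\cdot X_tG$ is still $j-1-\beta$, giving $\beta'=\beta+1$. Since $I$ is generic, $gx_t\in\mathfrak{n} J$, hence $x_tg\cdot G=0$, and the product rule computes
\[
(x_tg)\cdot X_tG\;=\;(g+x_tg'_t)\cdot G\;=\;Z^{[j-1-\beta]}+x_tg'_t\cdot G,
\]
so $x_tg$ agrees with $g'$ modulo the corrective element of $R'\cdot G$ contributed by $x_tg'_t\cdot G$.

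It remains to verify $g'x_i\in\mathfrak{n} J'$ for $1\le i\le n-1$. That $g'x_i\in J'$ is immediate: $g'x_i\cdot X_tG=x_i\cdot Z^{[j-1-\beta]}=0$, because $x_i$ with $i<n$ annihilates pure-$Z$ monomials. To upgrade to $\mathfrak{n} J'$ I would use the genericity of $I$ to write $gx_i=\sum_k u_kh_k$ with $u_k\in\mathfrak{n}$ and $h_k\in J$, and then translate via the product rule into an expression $g'x_i=\sum_l v_le_l$ with $v_l\in\mathfrak{n}$ and $e_l\in J'$. Each piece $x_th_k\cdot X_tG$ reduces, by the product rule and $h_kG=0$, to $x_t(\partial h_k/\partial x_t)\cdot G$, and the aggregate---already known to vanish on $X_tG$---must be repackaged into the ideal $\mathfrak{n} J'$.

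The main obstacle lies in this translation: individual $x_th_k$ need not belong to $J'$, since $x_t(\partial h_k/\partial x_t)$ is typically not in $J$. The delicate point of the proof is to use the specific form forced on the decomposition $gx_i=\sum_k u_kh_k$ by the identity $g\cdot G=Z^{[j-1-\beta]}$, together with the corrections supplied by the product rule, to show that the aggregate sum $\sum_k u_k(x_th_k)$, and hence $g'x_i$, admits an honest $\mathfrak{n} J'$-expression even though the individual summands may fail to be in $J'$.
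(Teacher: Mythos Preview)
Your proposal has a genuine gap: you explicitly label the crucial step---showing that $g'x_i$ admits an expression in $\mathfrak{n} J'$---as ``the delicate point of the proof,'' and then you do not carry it out. Describing what must be shown and expressing hope that the decomposition $gx_i=\sum u_kh_k$ can be ``repackaged'' is not a proof; you have isolated the obstruction but not removed it.

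The paper's argument avoids this difficulty entirely, and the reason is an identity you are not using: $\partial G'/\partial x_t = G$, where $G' = X_tG$. (Your product rule $x_t\cdot(X_tH)=H+X_t\cdot(x_tH)$ gives instead $x_t\cdot G'=G+X_t(x_tG)$, which is the source of all your correction terms; the paper's convention for $X_tG$ is the one making $x_t\cdot G'=G$ hold exactly.) From this identity two things follow at once. First, $wz^{\beta}-g\in I$ if and only if $wz^{\beta+1}-gx_t\in I'$: the forward direction is a one-line computation, and for the converse one shows that any $wz^t-h'\in I'$ with $t\le\beta$ forces $x_t\mid h'$ and then $wz^{t-1}-h'/x_t\in I$, contradicting minimality of $\beta$. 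Thus $g'=gx_t$ \emph{exactly}, with no correction. Second, $x_tJ\subseteq J'$: if $h\in J$ then $(x_th)\cdot G'=h\cdot(x_t\cdot G')=h\cdot G=0$. Combining these, $g'x_i=(gx_i)x_t$; since $gx_i\in\mathfrak{n} J$ by genericity of $I$, we get $g'x_i\in x_t(\mathfrak{n} J)=\mathfrak{n}(x_tJ)\subseteq\mathfrak{n} J'$, and $I'$ is generic.

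In your framework the extra term $x_t(\partial g/\partial x_t)\cdot G$ forces $g'\neq x_tg$, and your own observation that individual $x_th_k$ need not lie in $J'$ is precisely the obstruction that term creates. The paper's route sidesteps it completely by working with the relation $x_t\cdot G'=G$, which collapses both the identification of $g'$ and the containment $x_tJ\subseteq J'$ to trivialities.
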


\begin{proof} 
Let $X_t   = X, ~I' = \ann F'$ and $I = \ann F$.  Let $R' =
k[x_1, \ldots, x_{n-1}, z]$ and $J' = I' \cap k[x_1, \ldots, x_{n-1}, z]$.
Then $I' = (wx_i,  1\le i\le n-1, w^2, J', wz^{\beta'} - g')$ for some
$g' \in R'~ \backslash ~J'$ and $\beta' \leq j$. We first show that
$wz^{\beta'} - g'$ can be replaced with $wz^{\beta+1} - h$ for some $h
\in R' ~ \backslash ~ J'$ and $hx \in \n J'$.

\vskip 2mm
\noindent
\textsc{Claim:} $wz^{\beta} - g \in I$ for some $g$ if and only if 
$wz^{\beta+1} - gx \in I'$.

\vskip 2mm
\noindent
\textit{Proof of the claim:} If $wz^\beta - g \in I$, then
\begin{eqnarray*}
  0 & = & \frac{\partial F}{\partial wz^\beta - g} = \frac{\pa G}{\pa
  wz^\beta} + \frac{\pa WZ^{[j-1]}}{\pa wz^\beta}- \frac{\pa G}{\pa g} 
  - \frac{\pa WZ^{[j-1]}}{\pa g} \\
  & = & Z^{[j-\beta-1]} - \frac{\pa G}{\pa g} - W\frac{\pa
  Z^{[j-1]}}{\pa g}.
\end{eqnarray*}
This implies
$\displaystyle{\frac{\pa Z^{[j-1]}}{\pa G}} = 0$ and
$\displaystyle{\frac{\pa G}{\pa g} = Z^{[j-\beta-1]}}$. Now,
consider
\begin{eqnarray*}
  \frac{\pa(G' + WZ^{[j]})}{\pa wz^{\beta+1}-gx} & = & \frac{\pa
  G'}{\pa wz^{\beta+1}} + \frac{\pa WZ^{[j]}}{\pa wz^{\beta+1}} -
  \frac{\pa G'}{\pa gx} - \frac{\pa WZ^{[j]}}{\pa gx} \\
  & = & Z^{[j-\beta-1]} - \frac{\pa G}{\pa g} \\
  & = & 0.
\end{eqnarray*}
Therefore $wz^{\beta+1} - gx \in \ann(G' + WZ^{[j]}) = I'$.

\vskip 2mm
\noindent
Suppose $wz^t - h' \in I'$ for some $t < \beta + 1$ and $h' \notin J'$.
Then 
\begin{eqnarray*}
  0 & = & \frac{\pa(G' + WZ^{[j]})}{wz^t - h'} = \frac{\pa G'}{\pa
  wz^t} + \frac{\pa WZ^{[j]}}{\pa wz^t} - \frac{\pa G'}{\pa h'} - 
  \frac{\pa WZ^{[j]}}{\pa h'} \\
  & = & Z^{[j-t]} - \frac{\pa G'}{\pa h'} - W\frac{\pa Z^{[j]}}{\pa
  h'}.
\end{eqnarray*}
Hence $\displaystyle{\frac{\pa Z^{[j]}}{\pa h'}} = 0$ and
$\displaystyle{\frac{\pa G'}{\pa h'} = Z^{[j-t]}}$. This implies
that $x$ divides $h'$. Let $h' = xh$ for some $h$. 
Consider  $wz^{t-1} - h$,
\begin{eqnarray*}
  \frac{\pa G + WZ^{[j-1]}}{\pa wz^{t-1} - h} & = & \frac{\pa G}{\pa
  wz^{t-1}} + \frac{\pa WZ^{[j-1]}}{\pa wz^{t-1}} - \frac{\pa G}{\pa
  h} - \frac{\pa WZ^{[j-1]}}{\pa h} \\
  & = &  Z^{[j-t]} - \frac{\pa G}{\pa h} - W\frac{\pa Z^{[j-1]}}{\pa
  h}.
\end{eqnarray*}
If $\displaystyle{\frac{\pa Z^{[j-1]}}{\pa h}}$
is non-zero, then $h$ contains a pure power of $z,$ which must be $z^t$. 
But $\displaystyle{\frac{\pa G'}{\pa h'} = Z^{[j-t]}}$ would then mean that $G'$ has a term $XZ^j$ which is not possible by our assumption on $F$.  

Therefore $\displaystyle{\frac{\pa Z^{[j-1]}}{\pa h} = 0}$. 
Since $G = \displaystyle{\frac{\pa G'}{\pa x}}$ and $h' = xh$, 
we get $\displaystyle{\frac{\pa G+WZ^{[j-1]}}{\pa wz^{t-1}-h} = 0}$.
Therefore $wz^{t-1} - h \in I$. This
contradicts the minimality of $\beta$. This completes the proof of the
claim.

\vskip 2mm
\noindent
Suppose $I = (I_2, J, wz^\beta - g)$ is generic
and let $I' = (I_2, J', wz^{\beta+1} - gx)$. First note that
if $h \in \n J$, then $hx_t \in \n J'$ for all $t$.   Since $gx_t \in
\n J$, $gx_sx_t \in \n J'$ for all $1\le s,t \le n$.  Therefore they are not
part of minimal generating set for $J'$. Hence $I'$ is generic.
\end{proof}

We may also restate the theorem as a necessary condition as follows:\\
 Let $I = \ann F =G+WZ^{[j]}$.  Suppose $G = X_tG_{X_t}$ for some $1\le t\le n-1$. 
 Then $I = \ann F$ is generic if $ \ann G_{X_t}+WZ^{[j-1]}$ is generic.  

As a result of the above theorem, we concentrate on polynomials $F = G
+ WZ^{[j-1]}$ in the divided power algebra such that none of the $X_i$, $1\le i \le n-1$ divides $G$ and of course that $G$ has no term containing $Z^{[j-1]}$.

The converse of the above theorem is not true in general as we can see in the Example \ref{converse}.

%\begin{example}{\label{converse}}
%Let $I = \ann(Y^{[5]}Z^{[8]}+Y^{[4]}Z^{[9]}+X^{[5]}(Y^{[6]}Z^{[2]}+Y^{[7]}Z+Y^{[8]}+Z^{[8]})+WZ^{[12]})$ and $I'
%= \ann(XY^{[5]}Z^{[8]}+XY^{[4]}Z^{[9]}+X^{[6]}(Y^{[6]}Z^{[2]}+Y^{[7]}Z+Y^{[8]}+Z^{[8]})+WZ^{[13]})).$ Then 
 %$\mu(I) = 11, ~
%\mu(J) = 8, ~ \mu(I') = 13$ and $\mu(J') = 9$. Therefore $I'$ has the
%expected number of generators, but $I$ does not have.
%\end{example}

We remark that it is not always the case that we can achieve
'genericity' by multiplying by an $X_i$ even if one of $gx_j$ is a
non-minimal generator for the corresponding ideal $J$.  See the
examples in the last section.  However, it is an interesting question,
whether if $F = G+WX_n^{[j-1]}\in k_{DP}[X_1, \ldots X_{n}, W]$ is not
generic, does there exist a suitable power of $X_i, 1\le i\le n-1$
multiplying G by which will result in a generic ideal, better yet, 
does there exist a  suitable one
form $L(X_1, \ldots X_{n-1})$  so that $I'= \ann ( LG+WX_n^{[j]})$ is
such that $\mu (I') = n+1+ \mu ( I')\cap k[x_1, \ldots, x_n]$?

%In the rest of the paper, we discuss the number of generators of
%$\ann(G + WZ^{[j-1]})$ when neither $X$ nor $Y$ divide $G$.

%Let $F = \sum_{p+q = j} Y^pZ^q + X^a(\sum_{r+s=a-j} Y^rZ^s + Z^{j-a})
%+ WZ^{j-1} = G_1 + X^aG_2 + WZ^{j-1}$ and let $\ann(F) = I$. 
\vskip .2truein
\section {Embedding Dimension Four}
In embedding dimension 4,  we can get a stronger characterization.  
We now let $n=3$ so that $R=k[w,x,y,z]$.
\begin{notation}\label{notation-f}
For the rest of the paper, we set 
\begin{eqnarray*}
F & = &  G_0 + X^{[a_1]}G_1 + \cdots + X^{[a_l]}G_l 
+ WZ^{[j-1]} \\
 & = & \sum_{t=0}^m c_{p-t}Y^{[p-t]}Z^{[q+t]} + 
X^{[a_1]}\left(\sum_{r_{1k}+s_{1k}=j-a_1}
c_{r_{1k}}Y^{[r_{1k}]}Z^{[s_{1k}]}\right) \\ & & +
  \cdots +  X^{[a_n]}\left(\sum_{r_{nk}+s_{nk} = j-a_n}
  c_{r_{nk}}Y^{[r_{nk}]}Z^{[s_{nk}]}\right) +WZ^{[j-1]}, 
\end{eqnarray*}
where $c_{p} \neq 0$, $a_1 <  \cdots < a_n$ and one of the $G_i's$
contain a pure power of $Z$.\\
\end{notation}

For a polynomial $h(x_1, \ldots,
x_l) \in k[x_1, \ldots, x_l]$, let $\deg_{x_i} h$ denote the highest
power of $x_i$ in $h$.   
Thus, in $F$ as in \ref{notation-f}, $\deg _X F = a_n,$.

\vskip .2truein
We first obtain a necessary condition for an ideal to be generic
in terms of $F$. Before we prove the result, 
we prove a technical, but very important lemma that is needed
in the proof of this theorem. This lemma will play a crucial role in the
proofs of all the forthcoming characterization results as well.
 
\begin{lemma} \label{lem0} With the notation as in
\ref{notation-f}, if $\deg_Y G_0 = p$
%c_{p}Y^{[p]}Z^{[q]} + c_{p-1}Y^{[p-1]}Z^{[q+1]}+ \cdots +
%c_{p-m}Y^{[p-m]}Z^{[q+m]}$ with $c_p \neq0$, 
then $\ann G_0$ doesn't have elements of degree less than or equal to
$j-p$ other than $y^{p+1}$ when $p < j-p$.
\end{lemma}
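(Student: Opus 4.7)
\textbf{(Proof plan.)} The polynomial $G_0 = \sum_{t=0}^m c_{p-t} Y^{[p-t]} Z^{[q+t]}$ lies in $k_{DP}[Y,Z]$ and is homogeneous of total degree $p+q = j$, with leading coefficient $c_p \neq 0$ on the monomial $Y^{[p]} Z^{[q]}$. Since every term of $G_0$ has $Y$-exponent at most $p$, the monomial $y^{p+1}$ acts as $0$ on $G_0$, so $y^{p+1} \in \ann G_0$; its degree $p+1$ does not exceed $q = j-p$ precisely because of the hypothesis $p < j-p$. The plan is to prove the stronger statement that every $f \in \ann G_0 \cap k[y,z]$ with $\deg f \le q$ lies in the principal ideal $(y^{p+1})$, so that $y^{p+1}$ is the only element of $\ann G_0$ of degree $\le q$ not accounted for by its own multiples.

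\smallskip

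\noindent
Dividing $f$ by $y^{p+1}$, I write $f = y^{p+1} h + r$ where every monomial appearing in $r$ has $y$-degree at most $p$; since $(y^{p+1}) \subset \ann G_0$, one has $r \in \ann G_0$ too, and the task reduces to showing $r=0$. Writing $r = \sum_{0\le a \le p,\ a+b \le q} \alpha_{ab}\, y^a z^b$, the divided-power action gives
\[
  r \cdot G_0 \;=\; \sum_{a,b,t} \alpha_{ab}\, c_{p-t}\, Y^{[p-t-a]} Z^{[q+t-b]} \;=\; 0,
\]
and, after substituting $a = a_0 - t$, $b = q - v + t$, the coefficient of $Y^{[p-a_0]} Z^{[v]}$ on the left equals $\sum_{t=0}^{a_0} c_{p-t}\, \alpha_{a_0 - t,\, q - v + t}$.

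\smallskip

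\noindent
I then induct on $a_0 = 0, 1, \dots, p$. The base case $a_0 = 0$ collapses the sum to $c_p\, \alpha_{0,\, q-v}$; letting $v$ range over $[0,q]$ and using $c_p \neq 0$ forces $\alpha_{0,b} = 0$ for every $0 \le b \le q$. In the inductive step, each summand with $t \ge 1$ involves $\alpha_{a_0-t,\,\cdot}$ with strictly smaller first index, which vanishes by the inductive hypothesis; what is left is $c_p\, \alpha_{a_0,\,q-v} = 0$, and letting $v$ range over $[a_0, q]$ knocks out every $\alpha_{a_0,b}$ with $0 \le b \le q - a_0$. Iterating up to $a_0 = p$ empties $r$, so $f \in (y^{p+1})$.

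\smallskip

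\noindent
The main obstacle is purely bookkeeping: I must verify that the admissible range $a_0 \le v \le q$ really hits every relevant $\alpha_{a_0,b}$, that the sign/size constraints $a \ge 0$, $b \ge 0$, $a+b \le q$ never suppress the $t=0$ summand at any stage, and that the indices $(a_0 - t,\, q - v + t)$ appearing in the $t \ge 1$ summands always correspond to coefficients already killed at the previous induction level. The nonvanishing of $c_p$ is what permits division at each stage, while the hypothesis $p < j-p$ is used only to place $y^{p+1}$ itself within the degree range in which the lemma is asserted.
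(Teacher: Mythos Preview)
Your proof is correct and rests on the same idea as the paper's: the nonvanishing of the leading coefficient $c_p$ forces the coefficient of the lowest $y$-power term in any annihilating element to vanish. The paper compresses this into a single leading-term contradiction---writing a homogeneous $g=\sum_{i=0}^k \alpha_i y^{r-i}z^i$ with $\alpha_k\neq 0$ and observing that $\alpha_k c_p Y^{[p-r+k]}Z^{[q-k]}$ is the unique term of maximal $Y$-degree in $\partial G_0/\partial g$---whereas you unfold the same mechanism into an explicit induction on $a_0$. Your version is more carefully bookkept (it handles non-homogeneous $f$ and makes explicit that the conclusion is $f\in(y^{p+1})$ rather than literally $f=y^{p+1}$, which is what the lemma must mean), but the substance of the two arguments is identical.
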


\begin{proof} 
%It suffices to show that there are no generators of degree $q$
%in $\ann G$.  
Clearly $y^{p+1} \in \ann G_0$.
Suppose $\ann G_0$ contains a generator of degree less than $q$, say
$g = \sum_{i=0}^k \alpha_i y^{r-i}z^i$ for some $0 \leq k \leq r$,
where $\alpha_k \neq 0$.
Then we have
\begin{eqnarray*}
  0 = \frac{\partial G_0}{\partial g} 
  & = & \frac{\pa c_pY^{[p]}Z^{[q]} + c_{p-1}Y^{[p-1]}Z^{[q+1]}+
  \ldots + c_{p-m}Y^{[p-m]}Z^{[q+m]} }{\pa  \alpha_r y^{r}+
  \alpha_{r-1} y^{r-1}z+ \cdots +\alpha_{k} y^{r-k}z^{k}} \\
  & = & \alpha_kc_p Y^{[p-r+k]}Z^{[q-k]}+\mbox{terms in Y of degree less
  than } p-r+k.
\end{eqnarray*}
Since $c_p \neq 0$, $\alpha_k=0$, which is a contradiction. Hence the
assertion follows.
\end{proof}

\begin{theorem} \label {th0}
With the notation as in \ref{notation-f}, if $I$ is generic,
then either $a_1 \leq \deg_Y G_0$ or
$\deg_Y G_0 \leq \max \{\deg_Y G_i ~ : ~ i = 1, \ldots, n\}$.
\end{theorem}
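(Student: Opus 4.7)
The plan is to prove the contrapositive: assume $a_1 > p := \deg_Y G_0$ and $\deg_Y G_i < p$ for every $i = 1, \ldots, n$, and exhibit an element $gy$ that is a minimal generator of $J$; by Remark 1.1 this forces $\mu(I) < \mu(J) + n + 1$, so $I$ is not generic. A direct apolarity computation produces the candidate: $wz^{p-1} - c_p^{-1} y^p \in I$, because $\partial(wz^{p-1}) WZ^{[j-1]} = Z^{[q]}$ with $q = j - p$, while $\partial y^p G = \partial y^p G_0 = c_p Z^{[q]}$ (only the leading term $c_p Y^{[p]} Z^{[q]}$ of $G_0$ survives, and every $\partial y^p (X^{[a_i]} G_i)$ vanishes since $\deg_Y G_i < p$). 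Hence the canonical extra generator $wz^\beta - g$ of $I$ has $\beta \leq p - 1$ and $g$ is homogeneous of degree $\beta + 1 \leq p$.

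Next I would pin down the shape of $g$ modulo $(x)$. Writing $g = g_0 + xg_1$ with $g_0 \in k[y,z]$, the bound $\deg g \leq p < a_1 \leq a_i$ forces every monomial $x^c y^a z^b$ of $g$ to satisfy $c < a_i$, so $\partial(x^c y^a z^b)(X^{[a_i]} G_i) = X^{[a_i - c]} \partial(y^a z^b) G_i$ retains a positive power of $X$. Therefore the $X$-degree-zero component of $\partial g F$ is exactly $\partial g_0 G_0$, and the relation $\partial(wz^\beta - g) F = 0$ forces $\partial g_0 G_0 = Z^{[j - 1 - \beta]}$. Running the leading-$Y$-coefficient argument from the proof of Lemma \ref{lem0} on $g_0$ shows that whenever $\deg g_0 < p$ the derivative $\partial g_0 G_0$ must carry a nonzero $Y$-component, contradicting the pure-$Z$ right hand side. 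It follows that $\beta = p - 1$ and $g_0 = c_p^{-1} y^p$.

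I would then establish the structural fact $J_{\leq p} \subset (x)$. For a homogeneous $f \in J$ of degree at most $p$, the same decomposition $f = f_0 + xf_1$ and the same $X$-separation argument yield $\partial f_0 G_0 = 0$, so $f_0 \in \ann G_0 \cap k[y,z]$; Lemma \ref{lem0}, whose only low-degree generator of $\ann G_0$ in $k[y,z]$ is $y^{p+1}$, combined with the bound $\deg f_0 \leq p < p + 1$, gives $f_0 = 0$ and hence $f \in (x)$. As a special case $y^{p+1} \in J$, since $\partial y^{p+1} F = 0$ (every term of $F$ has $Y$-degree at most $p$).

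To close: $wy \in I$ forces $gy \in J$, and by the second step $gy \equiv c_p^{-1} y^{p+1} \pmod{(x)}$. If $gy \in \n J$, writing $gy = \sum \phi_i \psi_i$ with $\phi_i \in \n$ and $\psi_i \in J$ homogeneous of degree at most $p$ forces each $\psi_i \in (x)$, so $gy \in (x)$, contradicting $gy \not\equiv 0 \pmod{(x)}$. Hence $gy$ is a minimal generator of $J$ and, by Remark 1.1, $I$ is not generic. The main technical obstacle is running the leading-$Y$-coefficient induction against a nonzero target $Z^{[j-1-\beta]}$, rather than against $0$ as in Lemma \ref{lem0}; the hypothesis $a_1 > p$ is precisely what decouples the $X$-graded pieces of $\partial g F$ throughout, and what makes the clean reduction to $\ann G_0 \cap k[y,z]$ possible.
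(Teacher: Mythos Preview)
Your argument is correct when $p \leq j-p$ (the paper's Case~1), but it breaks down when $p > j-p$ (Case~2), and this is exactly why the paper splits into two cases. The leading-$Y$-coefficient argument from Lemma~\ref{lem0} requires every monomial $y^{r-i}z^i$ appearing in $g_0$ to satisfy $i \leq q = j-p$; this is automatic when $\deg g_0 \leq q$, but not when $q < \deg g_0 \leq p$. Concretely, take $G_0 = Y^{[5]}Z^{[2]} + Y^{[4]}Z^{[3]} + Y^{[3]}Z^{[4]}$ (so $j=7$, $p=5$, $q=2$) and $F = G_0 + X^{[6]}Z + WZ^{[6]}$. Then $g_0 = y^4 - y^2z^2 + yz^3$ has degree $4 < p$ yet $\partial_{g_0}G_0 = Z^{[3]}$ is a pure $Z$-power, and one checks $wz^3 - g_0 \in I$ is the minimal extra generator. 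So $\beta = 3 \neq p-1 = 4$, falsifying your step~2. The same obstruction undermines step~3: $(\ann G_0)_{\leq p}$ need not vanish modulo pure $z$-powers once $p > q$ (in the example $y\theta_1$, with $\theta_1 = y^4 - y^3z + yz^3 - z^4$, is a nonzero element of degree $p$ in $\ann G_0$ with no pure $z$-term), so your claim $J_{\leq p}\subset(x)$ is not established.

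The paper's Case~2 avoids this by not insisting that $\beta = p-1$. Instead it takes the actual minimal generator $wz^{\beta-1}-g$ with $g\in k[y,z]$, uses the complete-intersection structure $\ann G_0 = (y\delta,\theta)$ in $k[y,z]$ (with $\theta$ carrying the pure $z$-term), and argues that $g$ can be taken to be a scalar multiple of $\delta$, so that $yg$ is one of the two canonical minimal generators $y\delta$, $y\theta$ of $J$ coming from $\ann G_0$. You would need an argument of this flavor---or some other replacement for the degree bound that Lemma~\ref{lem0} no longer supplies---to close the gap when $2p > j$.
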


\begin{proof}
%Before we prove the theorem, 
We first show that if $f \in \ann
F$ with $\deg f < a_1$, then either $f \in \ann G_0 \cap \ann WZ^{[j-1]} \cap \ann (\sum_{i=1}^n
X^{[a_i]}G_i)$  or $f \in \ann(G_0+WZ^{[j-1]}) \cap \ann (\sum_{i=1}^n
X^{[a_i]}G_i).$ For if $f \in \ann F$, then 
$$ 
0 = \frac{\partial F}{\partial f}  =  \frac{\partial G_0}{\partial f} +
\sum_{i=1}^n \frac{\partial X^{[a_i]}G_i}{\partial f} + \frac{\partial
WZ^{[j-1]}}{\partial f}.
$$
Therefore
$$
 \frac{\partial G_0}{\partial f} +  \frac{\partial WZ^{[j-1]}}{\partial f}= 
- \sum_{i=1}^n\frac{\partial X^{[a_i]}G_i}{\partial f}.
$$
Since $G_0$ is a polynomial in $Y$ and $Z$, the term on the left hand
side of the above equality does not involve $X$. Since $\deg f <
a_1$ we get
that $\sum_{i=1}^n \frac{\partial X^{[a_i]}G_i}{\partial
f}  = 0 $ and $\frac{\partial (G_0+WZ^{[j-1]})}{\partial f}=0$.
We now proceed to the proof of the theorem. We show that if $a_i >
p > r_{ik}$ for every $i$ and $k$, where $p = \deg_Y G_0$, then $I$
is not generic.  We do this by considering the following two cases:

\vskip 2mm
\noindent
{\sc Case 1.} Suppose $p \leq j-p$. By Lemma \ref{lem0}, $y^{p+1}$ is a
minimal generator for $\ann G_0$ since all other generators have degree
bigger than $j-p$.  Since $p>r_{ik}$ for all $i, k$, it can be seen that
$y^{p+1} \in \ann G_0 \cap \ann WZ^{[j-1]} \cap \ann X^{[a_i]}G_i$ for
all $i$. We then conclude that $y^{p+1}$ is a minimal generator for
$\ann F$.\\
On the other hand, note that $f= wz^{p-1} - y^{p} \in \ann F$.  Since
the degree of $f=p<a_1$ and contains a term in $w$, we
get $f \in
\ann(G_0+WZ^{[j-1]})\cap \ann (\sum_{i=1}^l X^{[a_i]}G_i)$. We show
that $f$ is a minimal generator. Suppose $wz^\beta - g \in I$ for some
$\beta < p-1$. It implies that $y^p - z^{p-1-\beta}g \in I$ which is
impossible since $y^{p+1}$ is a minimal generator.  Therefore,
$wz^\beta - g \notin I$ for any $\beta < p-1$  so
that $f$ is a minimal generator for $I$.  Since $y^{p+1}$ is a
minimal generator,  $I$ is not generic.

\vskip 2mm
\noindent
{\sc Case 2.} Suppose $p > j-p$. In that case $wz^{p-1}-y^p \in \ann F$, but might not be a minimal generator for $I$. Let $f =wz^{\beta-1} - g(x,y,z)$  be a
minimal generator for $I$, with $\beta  \leq p$. We first show that we can 
replace $g(x,y,z)$ by $g(y,z)$. We write $g(x,y,z)=g_1(y,z)+xg_2(x,y,z)$. Then, 
$$\frac{\partial F}{\partial f} =-\frac{\partial G_0}{\partial
g_1(y,z)} - \displaystyle \sum_{i=1}^n \frac{\partial
  X^{[a_i]}G_i}{\partial g_1(y,z)+xg_2(x,y,z)} + \frac{\partial
	WZ^{[j-1]}}{\partial wz^{\beta-1}}=0.$$
Since $a_i >p \geq  \beta$, we get $\frac{\partial G_0}{\partial g_1}=
Z^{[j-\beta]}$ and $\displaystyle \sum_{i=1}^n \frac{\partial
  X^{[a_i]}G_i}{\partial g_1+xg_2} =0$. Therefore, either $g_1=y^p$ or
  the degree of $g_1$ is strictly less than $p$. But in both cases and
  by Lemma \ref{lem0}, the degree of $g_1$ is greater than $j-p$. On
  the other hand, the degree of $G_i$ is at most $j-p-1$. It follows
  that  $\frac{\partial X^{[a_i]}G_i}{\partial g_1}=0$ for all $i$,
  and hence $ xg_2 \in J$. So $g(x,y,z)$  can be replaced by $g(y,z)$.
  \\
We know
that $\frac{\partial F}{\partial g(y,z)} = Z^{[j-\beta]}$ and
$yg(y,z)\in \ann F$. In fact, $yg(y,z)\in \ann G_0 \cap \ann
(WZ^{[j-1]}) \cap \ann (\sum_{i=1}^n X^{[a_i]}G_i)$. As in the case of the proof of Lemma $4.2$ in \cite{E-S}, it can be seen that
 $\ann G_0$ is minimally generated in $k[y,z]$ by a regular
sequence $(y\delta(y,z), \theta(y,z))$ with $\theta(y,z)= z^t +
\theta_1(y,z)$.  By Lemma \ref{lem0}, 
the degrees of $y\delta(y,z)$ and $\theta(y,z)$ are at least
$j-p+1$ and the degrees of the $G_i's$ are at
most $j-p-1$. It follows that $y\delta(y,z)$ and $\theta(y,z) \in  \ann G_0 \cap \ann  (\sum_{i=1}^n X^{[a_i]}G_i)$. But $\theta(y,z) \notin \ann WZ^{j-1}$. Therefore 
$\theta(y,z) \notin \ann F$, but $y\theta(y,z) \in \ann F$.
Hence, $y\delta(y,z)$ and $y\theta(y,z)$ are minimal generators for $\ann F$.\\
We now show that $yg(y,z)$ is a minimal generator for $I$ and can be
chosen to be $y\delta(y,z)$. Suppose $yg(y,z)$ is not minimal then
$yg(y,z)= f_1(y,z)y\delta(y,z)+f_2(y,z)y\theta(y,z)$. It implies that
$g(y,z)= f_1(y,z)\delta(y,z)+f_2(y,z)\theta(y,z)$. Consider
$$\frac{\pa G_0}{\pa g(y,z)}= \frac{\pa G_0}{\pa
f_1(y,z)\delta(y,z)+f_2(y,z)\theta(y,z)}=  \frac{\pa G_0}{\pa
f_1(y,z)\delta(y,z)}=Z^{[j- \beta]}.$$
We can choose $g(y,z)= f_1(y,z)\delta(y,z)$. If $f_1(y,z)$ is a constant then we are done. Otherwise $f_1(y,z) =
cz^u$ since $y\delta(y,z)$ is in $J$. On the other hand,
$\delta(y,z)$ cannot have a pure power of $z$, otherwise $g(y,z)$ will
have a pure power of $z$ and will not belong to $I$. But $\frac{\partial G_0}{\partial \delta(y,z)}= Y\frac{\partial G_0}{\partial y \delta(y,z)} +\alpha Z^{[\gamma]}= \alpha Z^{[\gamma]}$ with $\alpha \neq 0$ and the multiplication is the usual polynomial multiplication.
 It implies that  $f'= wz^{\beta-1-u} -
\delta(y,z)$ is a minimal generator for $I$ which contradicts the
minimality of $f$. Hence, $g(y,z)= c\delta(y,z)$ and $yg(y,z)$ is
minimal. Hence $I$ is not generic.
\end{proof}

%So in the rest of our paper we consider $a_i >p$ for all $i$ and study the cases when there exists an $r_{ij}$ such that $p \geq r_{ij}$. We first study $F$ when it has only two terms, in other word when $F$ is of the form $F =  G_0 + X^{[a_1]}G_1 +WZ^{[j-1]}$, then we generalize our theorems to the regular $F$.
%
%We next study the converse of Theorem \ref{th0}. 
%\subsection {$F = G_0 + X^{[a]}G_1 + WZ^{[j-1]}$}
In \cite{E-S}, it was proved that if $F = X^{[a_1]}G_1 + WZ^{[j-1]}$
with $a_1 \geq 1$, then $I$ is generic.
Example \ref{example2.3} shows that this property does not hold if $F
= G_0 + X^{[a]}G_1 + WZ^{[j-1]}$. 
%{\color{red} In this section our aim
%is to classify the polynomials $F = G_0 + X^{[a]}G_1 + WZ^{[j-1]}$
%such that $I = \ann F$ has the expected number of generators.  We have
%seen in the previous section that if $a_i \leq p$ for some $i$, then
%the behavior of $I$ depend even on the constants of $F$.  Therefore,
%in this section, we assume that $a > p$.} 
 Let 
\begin{eqnarray*}
F & = & G_0 + X^{[a]}G_1+WZ^{[j-1]} \\ 
 & = & \sum_{r=0}^m c_{p-r}Y^{[p-r]}Z^{[q+r]} + 
X^{[a]}\left(\sum_{r_{i}+s_{i}=j-a}
\alpha_{r_{i}}Y^{[r_{i}]}Z^{[s_{i}]}\right) + WZ^{[j-1]}.
\end{eqnarray*}

If we suppose that $a>j-a \geq p$, then we get an improved version of
Theorem  \ref {th0} in this case.
 
\begin{theorem} \label {thm1} 
Let $F$ be as above and suppose $\deg_X F > \deg G_1 \geq \deg_Y G_0$. Then
$I$ is generic if and only if $\deg_Y G_1 = p$.
\end{theorem}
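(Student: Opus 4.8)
The plan is to make the single generator of $I$ lying outside $(w^2,wx,wy,J)$ explicit and to reduce genericity to a membership test, then to evaluate that test against the apolar structure of the binary forms $G_0$ and $G_1$. By \cite{E-S} we may write $I=(w^2,wx,wy,J,\,wz^{\beta-1}-g)$, where $g\in k[y,z]$ is homogeneous of degree $\beta$, $\beta$ is minimal, and the three conditions $\partial G_0/\partial g=Z^{[j-\beta]}$, $\partial G_1/\partial g=0$ (that is, $g\in\ann G_1$ in $k[y,z]$), and the requirement that $g$ contain no pure power of $z$ all hold. Multiplying $wz^{\beta-1}-g$ by $x$ and by $y$ and using $wx,wy\in I$ gives $gx,gy\in J$, and by the criterion following the Remark, $I$ is generic if and only if both $gx$ and $gy$ lie in $\n J$. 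Thus everything reduces to testing $gx,gy\in\n J$.

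Next I would extract the structure forced by the hypothesis $a>j-a\ge p$. It gives $p<j-p$, so Lemma \ref{lem0} applies and $\ann G_0$ is the complete intersection $(y^{p+1},\theta)$ with $\theta$ homogeneous of degree $j-p+1$. Differentiating $\partial G_0/\partial g=Z^{[j-\beta]}$ by $y$ shows $yg\in\ann G_0$; writing $yg=Py^{p+1}+Q\theta$ and comparing pure-$z$ parts forces $Q\in(y)$, and since $g$ is homogeneous with $\beta\le j-a+1<j-p+1=\deg\theta$ (a degree-$(j-a+1)$ solution $g=y^{p}z^{\,j-a-p+1}$ always exists, and $a>p$), the $\theta$-term cannot appear, so $g=y^{p}v$ with $v\in k[y,z]$ homogeneous of degree $\beta-p$. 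The governing dichotomy is then: $y^{p+1}\in J$ if and only if $\partial G_1/\partial y^{p+1}=0$, i.e. if and only if $\deg_Y G_1\le p$; and $v\in\n$ (equivalently $\beta>p$) precisely when $\deg_Y G_1\ge p$, since $\beta=p$ would force $g=cy^{p}\in\ann G_1$ and hence $\deg_Y G_1<p$.

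For the ``if'' direction, suppose $\deg_Y G_1=p$. One checks from the defining conditions that $\beta=j-a+1$, so $v\in\n$, and also $y^{p+1}\in J$; hence $gy=y^{p+1}v\in\n J$. For $gx$ I would observe that $\partial F/\partial x=X^{[a-1]}G_1$, so $x\gamma\in J$ for every $\gamma\in\ann G_1\cap k[y,z]$. Thus it suffices to see that $g\in\n'(\ann G_1)$ with $\n'=(y,z)$: the apolar ideal $\ann G_1$ of the binary form $G_1$ of degree $j-a$ is a complete intersection whose two generator degrees sum to $j-a+2$, hence (when $j-a>p$) both are strictly below $\beta=j-a+1$, so $(\ann G_1)_\beta=\n'(\ann G_1)_\beta$; the boundary value $j-a=p$ is absorbed by adjusting $g$ modulo $y^{p+1}\in J$. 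Writing $g=\sum m_i\gamma_i$ with $\gamma_i\in\ann G_1$ and $m_i\in\n'$ then gives $gx=\sum m_i(x\gamma_i)\in\n J$. Hence $I$ is generic.

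For the ``only if'' direction, Theorem \ref{th0} already yields $\deg_Y G_1\ge p$ (its alternative $a\le\deg_Y G_0$ is excluded by $a>p$), and the case $\deg_Y G_1<p$ is exactly the situation shown non-generic in Theorem \ref{th0}, where $\beta=p$, $g=cy^{p}$ and $gy=cy^{p+1}$ is itself a minimal generator. It therefore remains to treat $\deg_Y G_1>p$: here $y^{p+1}\notin J$ while $gy=y^{p+1}v$ with $v\in\n$, and I claim this forces $gy$ to be a minimal generator of $J$, so $I$ is not generic. This last claim is the main obstacle. The difficulty is that, a priori, $\n J$ in degree $\beta+1$ can be fed by elements of $J$ of every $x$-degree, so one cannot simply remove the factor $y^{p+1}$ that is missing from $J$. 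I expect the resolution to combine the minimality of $\beta$ --- which rules out low-degree elements of $J$ whose leading term is a high pure power of $y$ --- with an explicit description of $J\cap k[y,z]=\ann G_0\cap\ann G_1\cap(y,z^{j})$ and of the $x$-degree-one layer $x\cdot\ann G_1$, in order to show that no $\n$-combination of strictly lower-degree members of $J$ can reproduce $y^{p+1}v$.
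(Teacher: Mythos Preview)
Your proposal has a genuine and acknowledged gap in the ``only if'' direction when $\deg_Y G_1>p$: you correctly isolate the target --- that $gy=y^{p+1}v$ must be a minimal generator of $J$ --- but you do not prove it, and the sketch you offer (filtering $J$ by $x$-degree and invoking minimality of $\beta$) does not by itself control the $\n$-combinations you worry about. The paper closes this gap by working inside $\ann G_1$ rather than inside $J$. Since $\ann G_1\subset k[y,z]$ is a complete intersection $(y\delta(y,z),\theta(y,z))$ with $\theta=z^{t}+\theta_1$, one observes that for either generator to produce an element of $I$ it must first be multiplied by a suitable power of $y$ (to eliminate the pure $z$-term of $\theta$ and to push every $y$-exponent up to at least $p+1$). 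One then has two elements of $\ann G_1$ in play: $g=c_p^{-1}y^{p}z^{q}+g_1$ and a putative earlier generator $y^{p+1}z^{q-k}+g_2$; neither can be obtained from the other in $\ann G_1$, and after the forced $y$-multiplications their images in $I$ remain independent. This is the missing mechanism that rules out $yg\in\n J$.

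In the ``if'' direction your structural route is sound and in fact a pleasant alternative to the paper's explicit verification that $x\cdot g$ is not minimal (the paper exhibits a concrete element $x(\alpha_{p-j}y^{p}z^{s}-\alpha_p y^{p-j}z^{s+j})\in I$, whereas you use that $g\in\n'(\ann G_1)$ and $x\cdot\ann G_1\subset J$). However, your sentence ``One checks from the defining conditions that $\beta=j-a+1$'' is precisely what the paper spends effort establishing via repeated use of Lemma~\ref{lem0}; it is not automatic from your setup, and your later claim that both generators of $\ann G_1$ lie strictly below degree $\beta$ depends on it. The short argument you need is: from $g=y^{p}v$ and $\partial G_0/\partial g=Z^{[j-\beta]}$ one gets $v=c_p^{-1}z^{\beta-p}+y\cdot(\text{something})$; then $g\in\ann G_1$ together with $\deg_Y G_1=p$ (so $\alpha_p\neq 0$) forces $\partial G_1/\partial(y^{p}z^{\beta-p})=c_p^{-1}\alpha_p Z^{[s-(\beta-p)]}=0$, hence $\beta>p+s=j-a$, giving $\beta=j-a+1$. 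With this in hand your degree count for $\ann G_1=(y^{p+1},\theta_1)$ (degrees $p+1$ and $s+1$) goes through for $s\ge 1$, and your ``adjust $g$ modulo $y^{p+1}\in J$'' remark handles $s=0$.
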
 

\begin{proof}
  Suppose $\deg_Y G_1 = p$.  We write $F  =  c_p^{-1}Y^{[p]}Z^{[a+s]}+\cdots+
c_2Y^{[2]}Z^{[a+s+p-2]} +X^{[a]}(\alpha_pY^{[p]}Z^{[s]}+
 \cdots+ \alpha_0 Z^{[p+s]} )+WZ^{[j-1]}$
with $c_p, \alpha_p$ and $\alpha_0 \neq 0$.\\
We first show that $wz^{p+s} -c_p^{-1}y^{p}z^{s+1}$ is a minimal generator
for $I$. Clearly, $wz^{p+s} -c_p^{-1}y^{p}z^{s+1} \in I.$
Suppose there exists $wz^{\beta}-g \in I$ for some $\beta < p+s$. As
in the proof of Theorem \ref{th0}, we can assume that $g$ is a
function of  $y$ and $z$. 
%For convenience, let us assume that $\beta = p+s -1$. 
Then we have
\begin{eqnarray*}
  0 & = & \frac{\partial F}{\partial wz^{\beta} -g} \\
& = & \frac{\partial G_0+X^{[a]}G_1+WZ^{[j-1]}}{\partial wz^{\beta} -g}. 
\end{eqnarray*}
Therefore
$$\frac{\partial G_0+X^{[a]}G_1}{\partial g} =Z^{[j-1-{\beta}]}=
Z^{[a]}.$$

Since $G_0$ does not involve $X$ and $g$ has degree $p+s<a,$ we get
$\frac{\partial G_0}{\pa g}= Z^{[a]}$ and $\frac{ \pa G_1}{\partial g}
=0$. Therefore, there exists $0 \leq i \leq p-2<a$ with $c_{p-i} \neq 0$
such that $g = c_{p-i}^{-1}y^{p-i}z^{s+i}+ g_1(y,z)$.  If $i \neq 0,$ then
$\frac{\partial G_0}{\partial g}=  c_p.c_{p-i}^{-1} Y^{[i]}Z^{[a-i]}+ \cdots +Z^{[a]}+
\cdots $ and $g_1$ does not annihilate $G_0-c_{p-i}Y^{[p-i]}Z^{[a+s+i]}$
by Lemma \ref{lem0}, which is a contradiction. Hence $i=0$ and $g=
c_{p}^{-1}y^{p}z^{s}+ g_1(y,z)$. Therefore, $\frac{\partial G_0}{\partial g}=
Z^{[a]}$ and $\frac{\partial X^{[a]}G_1}{\partial g}= c_p^{-1} \alpha_p
X^{[a]}$. 
This implies that there exists a term $g_2(y,z)$ in $g_1(y,z)$ in $y$ and
$z$ of total degree $p+s$ such that $\frac{\pa X^{[a]}G_1}{\pa
g_2}=-c_p^{-1}\alpha_pX^{[a]}$. 
Again $\frac{\partial G_0-c_pY^{[p]}Z^{[a+s]}}{\partial g_2} \neq 0$ by
Lemma \ref{lem0}. Continuing in this manner, we see that
$wz^\beta - g$ does not annihilate $F$, which is a contradiction. Therefore $p+s$ is the
minimum exponent $\beta$ of $z$ such that $wz^\beta -g \in I$ and
hence $wz^{p+s} -c_p^{-1}y^{p}z^{s+1}$ is a minimal
generator in $I$.
\vskip 2mm
\noindent
Since $y^{p+1} \in I$, $y\cdot y^p z^{s+1}$ is not a minimal generator
of $I$. Note also that if $j$ is the smallest integer so that
$\alpha_{p-j} \neq 0$, then $x(\alpha_{p-j}y^{p}z^{s}-\alpha_p
y^{p-j}z^{s+j}) \in I$. Therefore, $x\cdot y^pz^{s+1}$ is also not a
minimal generator of $I$. Hence $I$ is generic.
%since $y^{p+1} \in I$ which makes $y.y^{p}.z^{s+1}$ not minimal. We
%also have for the smallest $i$ with $\alpha_{p-i} \neq 0$ that
%$x(\alpha_{p-i}y^{p}z^{s}-\alpha_p y^{p-i}z^{s+i}) \in I$ which makes
%$x.y^{p}.z^{s+1}$ not minimal. 

\vskip 2mm
\noindent
We now prove the converse. We know by Theorem \ref{th0} that if $a>p$
and $I$ is generic, then $\alpha_{p+i} \neq 0$
for some $i \geq 0$. With the extra assumption that $F$ has only two
terms and $a>j-a$, we show that $\alpha_p \neq 0$ and
$\alpha_{p+i} =0$ for $i>0$. So it suffices to show that if there exists
an $i>0$ such that $\alpha_{p+i} \neq 0$, then $I$ is not generic. 
Suppose $\alpha_{p+i} \neq 0$ for some $i$. Let 
\begin{eqnarray*}
F  & = & c_pY^{[p]}Z^{[a+s]}+\cdots +
c_2Y^{[2]}Z^{[a+s+p-2]} 
+X^{[a]}\left(\sum_{m=0}^{p+r} \alpha_{m}Y^{[m]}Z^{[p+s-m]}\right)
+WZ^{[j-1]},
\end{eqnarray*}
where $\alpha_{p+i} \neq 0$ for some $i>0$. It is clear that
$wz^{p+s}-{c_p}^{-1}y^pz^{s+1} \in I$. If it is not a minimal generator
for $I$, then there exists $\beta < p+s$ such that $wz^{\beta}-g(y,z)$
is minimal. Then we have 
\begin{equation*}  
\frac {\partial F}{\partial wz^{\beta}-g(y,z)}= Z^{[j-1-\beta]}
-\frac{\partial G_0}{\partial g}-X^{[a]} \frac{ \partial G_1}{\partial
g}  =0. \label{eq1} 
\end{equation*}
Since $a > p+s$, we get $ \frac{ \partial G_1}{\partial g}  =0 $ and $ \frac{\partial
G}{\partial g}=Z^{[j-1- \beta]}$.
We first claim that $g(y,z)= {c_p}^{-1}y^{p}z^{q}+g_1(y,z)$, where $p+q=
\beta+1$ and the exponents of $y$ in $g_1(y,z)$ are greater than $p+1$.
Let $g(y,z)= c_{p-i}^{-1}y^{p-i}z^{q} +g_1(y,z)$, with
$i \geq 0$ and the exponents of $y$ in $g_1$ strictly bigger than $p-i$. 
%satisfying $\frac{\partial c_{p-i} Y^{[p-i]}Z^{[a+s+i]}}{ \partial 
%\frac{1}{c_{p-i}}y^{p-i}z^{q}} =Z^{[j-1-\beta]}$. 
Then, $\frac{\pa (G_0-
c_{p-i} Y^{[p-i]}Z^{[a+s+i]})}{\pa g(y,z) }=0$.  But by Lemma
\ref{lem0}, $\ann(G_0- c_{p-i} Y^{[p-i]}Z^{[a+s+i]})$ doesn't contain 
generators of degree less than $a+s$. This implies that
$i=0$. Hence $g(y,z)$ is of the required form.
%on one hand, and all powers of $y$ in $g_1(y,z)$ must be 
%greater than $p+1$ on the other hand.

Let $q$ be the minimal power of $z$ such that
$wz^{\beta}-c_p^{-1}y^pz^{q}-g_1(y,z)$ is a minimal generator for $I$ with
$g_1 \in \ann G$ and $c_p^{-1}y^pz^q+g_1  \in \ann G_1$. 
We show that $f=y(c_p^{-1}y^pz^{q}+g_1(y,z))$  is a minimal generator for
$I$. 
%First notice that by Lemma  \ref{lem0}, $\ann G_0$ does not
%contain generators of degree less than $a+s$, except when these
%generators  have power of $y$ in each of their terms to be at least
%$p+1$. Since $p+q \leq p+s < a$, any generator in $\ann F$ of degree
%at most $p+q$ must be in $ \ann G_0 \cap \ann G_1$. Hence a generator in $\ann F$ of total degree $\leq p+q $ must have terms with the exponent of $y$ to be at least $p+1$. 
Suppose not, then $f$ can be obtained  from a
combination of a generator of the form $y^{p+1}z^{q-k} + g_2(y,z)$ and
other  generators of $I$.  Note that $y^{p+1}z^{q-k}+g_2(y,z) \in \ann
G_0$ and hence belongs to $\ann G_1$.  We have also seen that $c_p^{-1}y^pz^{q}+g_1(y,z)) \in \ann G_1$.  
It is clear that neither of the generators $y^{p+1}z^{q-k}+g_2(y,z)$
and $c_p^{-1}y^pz^q+g_1(y,z)$ can be obtained from each other. We then
study the minimal generators  of $\ann G_1$. We know that $ \ann G_1$
is minimally generated by a regular sequence of the form
$(y\delta(y,z), \theta(y,z))$ with $\theta(y,z)=z^t+\theta_1(y,z)$.
For these generators to be in $I$, no pure power of $z$ of degree less
than $j$ and no terms with degree of $y$ less than $p$ can appear in
any of them. In that case, $y\delta(y,z)$ and $\theta(y,z)$ will be
multiplied by the appropriate power of $y$ to get rid of the power of
$z$ and have all powers of $y$ in their terms greater or equal than
$p+1$. Since both generators $y^{p+1}z^{q-k}+g_2(y,z)$ and
$c_p^{-1}y^pz^q+g_1(y,z)$  are independent in $\ann G_1$,
$y^{p+1}z^{q-k}+g_2(y,z)$ and $y(c_p^{-1}y^pz^q+g_1(y,z))$ are
independent in $I$.  Hence $f$ is minimal and $I$ is not generic.  
\end{proof}

%\begin{remark} In the proof of Theorem \ref{thm1}, we showed that there exists exactly two minimal generators in $I$ in the two variables $y$ and $z$, such that the degree of $y$ 
%is greater than $p+1$ in all their terms. Both generators are coming
%from multiplying the minimal generators of $\ann G_1$ by the appropriate power of $y$.
%\end{remark}

Example \ref{ex1section2} shows that statement of Theorem \ref{thm1} 
does not hold if $p<a<p+s $.

%\subsection {General case}
\vskip 3mm

Now, we will consider the general case in codimension four.  Let F be as in Notation \ref{notation-f}. 
We now obtain a  generalization of Theorem \ref{thm1}.
%We obtain certain necessary conditions as well as sufficient
%conditions for an ideal to be generic.
%Let  
%\begin{eqnarray*}
%F  & = & G_0 + X^{[a_1]}G_1 + \cdots + X^{[a_n]}G_n 
%+ WZ^{[j-1]} \\
%& = &  \sum_{r=0}^m c_{p-r}Y^{[p-r]}Z^{[q+r]} + 
%X^{[a_1]}\left(\sum_{r_{1i}+s_{1i}=j-a_1}
%\alpha_{r_{1i}}Y^{[r_{1i}]}Z^{[s_{1i}]}\right)+ \cdots \\
%& & + X^{[a_n]}\left(\sum_{r_{ni}+s_{ni} = j-a_n}
%\alpha_{r_{ni}}Y^{[r_{ni}]}Z^{[s_{ni}]}\right)
%+WZ^{[j-1]},
%\end{eqnarray*}
%where $1< a_1<\cdots <a_n$ and $Y$ not dividing $G_n$. 
Along with Notation \ref{notation-f} we further assume that $G_n$ has
a pure power of $Z$ (in other words, $Y$ does not divide $G_n$).
%The proofs of
%the necessary condition and the sufficient condition are similar to
%the proof of Theorem \ref{thm1}. We repeat them for the sake of
%completeness.
%We begin with a sufficiency criterion for an ideal $I$ to have
%the expected number of generators:
We first prove a necessary condition for an ideal to be generic.

\begin{theorem} \label{tm2} 
Let $F$ be as in Notation \ref{notation-f} with $j-a_n < a_1$,
$Y$ not dividing $G_n$ and $\deg_Y G_i <\deg_Y G_0$ for all
$i=1,\ldots, n-1$.  If $I$ is generic, then
$\deg_Y G_n  = \deg_Y G_0$.
\end{theorem}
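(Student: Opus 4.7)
The plan is to view this theorem as the multi-term extension of the necessity direction of Theorem \ref{thm1}. Set $p := \deg_Y G_0$. The first move is to apply Theorem \ref{th0}: genericity forces either $a_1 \le p$ or $p \le \max_i \deg_Y G_i$. Since by hypothesis $\deg_Y G_i < p$ for $i < n$, the second alternative collapses to $\deg_Y G_n \ge p$, yielding the dichotomy
\[
a_1 \le p \qquad \text{or} \qquad \deg_Y G_n \ge p.
\]

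My first task is to eliminate the alternative $a_1 \le p$. Combining $a_1 \le p$ with the standing hypothesis $j - a_n < a_1$ gives $\deg_Y G_n \le j - a_n < p$, so $\max_i \deg_Y G_i < p$. I would then imitate the construction in the proof of Theorem \ref{th0} (choosing between its Case 1 and Case 2 according as $p \le j - p$ or not) to produce a minimal generator $wz^\beta - g$ of $I$ whose product $yg$ contains a term $y^{p+1} z^s$; because $p+1 > \deg_Y G_i$ for every $i$, this term is annihilated uniformly by each $X^{[a_i]} G_i$, and a short check shows $yg$ is itself a minimal generator of $J$, contradicting genericity.

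So we must have $\deg_Y G_n \ge p$; it remains to show $\deg_Y G_n \le p$. Assuming for contradiction $\deg_Y G_n > p$, the argument parallels the necessity direction of Theorem \ref{thm1}. Taking $\beta$ minimal with $wz^\beta - g(y,z) \in I$ (the reduction to $g \in k[y,z]$ uses the same ``$\deg f < a_1$'' splitting that appears in Theorem \ref{th0}), Lemma \ref{lem0} pins down the leading term of $g$ as $c_p^{-1} y^p z^q$. The hypothesis $j - a_n < a_1$ ensures that elements of low total degree act on $\sum_i X^{[a_i]} G_i$ through each $G_i$ in isolation, localizing the analysis to $\ann G_n$. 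Because $Y$ does not divide $G_n$, $\ann G_n$ is minimally generated in $k[y,z]$ by a regular sequence $(y\delta(y,z),\theta(y,z))$ with $\theta(y,z) = z^t + \theta_1(y,z)$; the same independence argument as in Theorem \ref{thm1} then shows that $yg$ is a minimal generator of $J$, contradicting genericity.

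The main obstacle lies in this final step: verifying that the low-degree generator $c_p^{-1} y^p z^q + g_1(y,z) \in \ann G_n$ stays independent, modulo the remaining minimal generators of $J$, of the regular-sequence generator $y^{p+1} z^{q-k} + g_2(y,z)$, so that $yg$ does not collapse into $\n J$. The hypothesis $\deg_Y G_i < p$ for $i < n$ prevents the intermediate $G_i$'s from interfering with this independence, and the hypothesis $j - a_n < a_1$ forces every low-degree annihilator computation to factor through $G_n$ alone.
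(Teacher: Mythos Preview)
Your overall strategy mirrors the paper's: invoke Theorem~\ref{th0} to obtain the dichotomy $a_1 \le p$ or $\deg_Y G_n \ge p$, and then, in the latter branch, run the minimal-generator/regular-sequence argument on $\ann G_n$ to exclude $\deg_Y G_n > p$. Your Step~3 sketch is essentially the paper's argument: the paper exhibits $wz^{p+s} - c_p^{-1}y^pz^{s+1} \in I$, passes to the minimal $\beta$, uses Lemma~\ref{lem0} to pin down the leading term of $g$, and then argues independence inside $\ann G_n$ via its regular-sequence generators $(y\delta,\theta)$ to force $yg$ to be a minimal generator of $J$.

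The gap is in your Step~2, where you try to eliminate the alternative $a_1 \le p$ by ``imitating the construction in the proof of Theorem~\ref{th0}''. Both cases of that proof use $a_i > p$ in an essential way. In Case~1 the separation lemma (elements of degree $< a_1$ split across $G_0 + WZ^{[j-1]}$ and $\sum X^{[a_i]}G_i$) is what forces $\beta = p-1$; if $a_1 \le p$, a putative $wz^\beta - g$ with $\beta < p-1$ has degree $\le p-1$, which need not be $< a_1$, so the splitting fails and you cannot conclude $\beta = p-1$. In Case~2 the reduction of $g(x,y,z)$ to $g(y,z)$ explicitly invokes ``since $a_i > p \ge \beta$''; with $a_1 \le p$ the $x$-part of $g$ may genuinely interact with the $X^{[a_i]}G_i$ terms, so the shape $g = c_p^{-1}y^pz^q + g_1(y,z)$ is not available and the ``short check'' that $yg$ is minimal does not go through.

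For what it is worth, the paper's own proof simply writes ``By Theorem~\ref{th0}, it suffices\ldots'' and never revisits the branch $a_1 \le p$; in effect it is tacitly working under $p < a_1$, which is in any case forced \emph{a posteriori} by the conclusion (since $p = \deg_Y G_n \le j - a_n < a_1$). So you have correctly spotted a point the paper elides, but the argument you propose to fill it does not survive the loss of the hypothesis $a_1 > p$.
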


\begin{proof} 
By Theorem \ref{th0}, it
suffices to show that if  there exists an $k$ such that $r_{ik}>p$ in
$G_n,$ then $I$ is not generic.
%Let 
%\begin{eqnarray*}
%F  & = & c_pY^{[p]}Z^{[a+s]}+\cdots +
%c_2Y^{[2]}Z^{[a+s+p-2]} 
%+X^{[a]}\left(\sum_{m=0}^{p+r} \alpha_{m}Y^{[m]}Z^{[p+s-m]}\right)
%+WZ^{[j-1]},
%\end{eqnarray*}
%where $\alpha_{p+k} \neq 0$ for some $k>0$. 
%
%
%We
%can write $F= c_{p}Y^{[p]}Z^{[a_n+s]} + \cdots
%+c_{2}Y^{[2]}Z^{[a_n+s_n-2]} +X^{[a_1]}YG_1+ \cdots X^{[a_n]}(
%\alpha_{p+k}Y^{[p+k]}Z^{[s-k]}+\ldots+ \alpha_pY^{[p]}Z^{[s]}+ \cdots+
%\alpha_0Z^{[p+s]})
%+WZ^{[j-1]}$ for $k>0$, $c_p \neq 0$, $\alpha_{p+k} \neq 0$ and $\alpha_0 \neq0$.\\
It is clear that $wz^{p+s}-c_{p}^{-1}y^pz^{s+1} \in I$. Suppose it is not a minimal generator for $I$, then there exists $\beta < p+s$ such that $wz^{\beta}-g(y,z)$ is minimal.\\
 Since  
 $$ 0 = \frac {\partial F}{\partial wz^{\beta}-g(y,z)}= Z^{[j-1-\beta]} -\frac{\partial G}{\partial g}-\sum_i \frac{ \partial X^{[a_i]}YG_i}{\partial g} -X^{[a_n]}\frac{\pa G_n}{\pa g}$$ we get  
 $ \frac{\partial G}{\partial g}=Z^{[j-1- \beta]}$ and the rest is  zero.\\
As in the case of the proof of Theorem \ref{thm1}, we can see that
$g(y,z)= c_{p}^{-1}y^{p}z^{q}+g_1(y,z)$ with $p+q= \beta+1$
 and $g_1(y,z)$ having all powers of $y$ greater than $p+1$.
 %Otherwise, suppose $g(y,z)= c_{p-i}y^{p-i}z^{q} +g_1(y,z)$ satisfying
 %$\frac{\partial c_{p-i} Y^{[p-i]}Z^{[a_n+s+i]}}{ \partial
   %-c_{p-i}y^{p-i}z^{q}} =Z^{[j-1-\beta]}$ then by lemma \ref{lem0},
   %$\ann(G- c_{p-i} Y^{[p-i]}Z^{[a_n+s+i]})$ doesn't contain generators of degree less than $a_n+s$. So $i=0$ one one hand, and all powers of $y$ in $g_1(y,z)$ must be greater than $p+1$ on the other hand.
% Let $q$ be the minimal power of $z$ such that
 % $wz^{\beta}-c_py^{p}z^{q}-g_1(y,z)$ is a minimal generator for $I$ with
 We note that $g_1 \in \ann G_0$ and $c_p^{-1}y^{p}z^q+g_1  \in \ann G_n
 \cap \ann G_i$ for all $i=1, \ldots, n-1$ in that case. It can be
 seen that $f=y(c_p^{-1}y^{p}z^{q}+g_1(y,z))$ is a minimal generator for
 $I$. For, if not, then $f$ can be expressed as a linear combination
 of other generators of $I$, one of which is of the form
 $y^{p+1}z^{q-k} + g_2(y,z)$.
  Note that $y^{p+1}z^{q-k}+g_2(y,z) \in \ann G_0 \cap 
  ( \cap_i\ann G_i)$ and $c_p^{-1}y^pz^{q}+g_1(y,z)) \in \cap_i\ann G_i$.  
It is clear that neither of the generators $y^{p+1}z^{q-k}+g_2(y,z)$ and
$c_p^{-1}y^pz^q+g_1(y,z)$ can be obtained from each other. 

Again, looking at  the minimal generators  of $\ann G_n$,  we know they are a regular sequence of the following form  $(y\delta(y,z), \theta(y,z))$ with 
$\theta(y,z)=z^t+\theta_1(y,z)$. For these generators to be in $I$, they must be
 multiplied by the appropriate power of $y$ to get rid of the 
power of $z$ and have all powers of $y$ in their terms greater or 
equal than $p+1$. Since both generators $y^{p+1}z^{q-k}+g_2(y,z)$ and
$c_p^{-1}y^pz^q+g_1(y,z)$  are independent in $\ann G_n$,
$y^{p+1}z^{q-k}+g_2(y,z)$ and $y(c_p^{-1}y^pz^q+g_1(y,z))$ are independent in $I$.
Hence $f$ is minimal and $I$ is not generic.
\end{proof}

% We also know that $\ann G_n$ is minimally generated by a regular sequence of the form $(y\delta_{n}(y,z), \theta_n(y,z))$ with $\theta_n(y,z)=z^t+\theta_{n1}(y,z)$. Since no pure power of $z$ of degree less than $j$ and no terms with degree of $y$ less than $p$ can appear in any of the generators of $I$, then $y\delta_{n}(y,z)$ and $\theta_{n}(y,z)$ will be multiplied by the appropriate power of $y$ to get rid of the power of $z$ and have all powers of $y$ in their terms greater or equal than $p+1$. So we let $r$ and $s$ be the minimal powers such that $y^r(y\delta_n)$ and $y^s \theta_n$ are minimal generator for $I$. We may choose both generators $y^{p+1}z^{q-j}+g_2(y,z)$and $y(y^pz^q+g'_1(y,z))$ to be $y^r(y\delta_n(y,z))$ and $y^s(\theta_n(y,z))$, which proves the theorem.

Examples \ref{ex1thm2} and \ref{ex2thm2} show that the converse of Theorem \ref{tm2} is not
true in general.
These examples suggest that we have to add more conditions on the
exponents to obtain a converse of Theorem \ref{tm2}. We prove the
converse with some extra assumptions.

\begin{theorem}  
Suppose $\deg_Y G_n = p, ~\deg_Z G_n = p+s, ~\deg G_n < a_1$
and $\deg_Y G_i < \deg_Y G_0$ for all $i=1,\ldots, n-1$. If $F$ satisfies one of the
two conditions below, then $I$ is generic.
\begin{enumerate}  
  \item  $\deg_Y G_i < \deg_Y (G_n - Y^pZ^s)$;
	%for the smallest $t \geq 1$ such that $\alpha_{p-t} \neq 0$ in $G_n$.
  \item $\deg_Z G_i < \deg_Z G_n$ for all $i <n$.
\end{enumerate}
\end{theorem}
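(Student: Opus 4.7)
The plan is to verify genericity directly via the criterion recalled just before Theorem~\ref{restrict}: I show that the unique mixed minimal generator $wz^\beta - g$ of $I$ satisfies $gx, gy \in \n J$, where $\n=(x,y,z)$.

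First I pin down the shape of the mixed generator. Following the scheme used in Theorems~\ref{thm1} and~\ref{tm2}, the hypothesis $\deg G_n < a_1$ forces any putative $wz^{\beta'}-g'$ with $\beta'<p+s$ to have $\deg g' = \beta'+1 < a_1$, so the terms $X^{[a_i]}G_i$ contribute nothing to $\pa_{g'} F$ and the problem reduces to the annihilator of $G_0$ in low degree, which Lemma~\ref{lem0} rules out. Comparing $X$-gradings in the identity $\pa_g F = Z^{[a_n-1]}$ then pins down the structure $g = c_p^{-1} y^p z^{s+1} + \tilde g$, where $\tilde g \in k[y,z]$ is supported on monomials of $y$-degree at least $p+1$.

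The verification of $gy \in \n J$ is then immediate. Because $\deg_Y G_0 = \deg_Y G_n = p$ and $\deg_Y G_i < p$ for $i<n$, the monomial $y^{p+1}$ annihilates $F$, so $y^{p+1}\in J$ is a minimal generator. Writing $\tilde g = y^{p+1}\tilde h$, one computes $gy = y^{p+1}\bigl(c_p^{-1}z^{s+1} + y\tilde h\bigr) \in y^{p+1}\cdot\n \subseteq \n J$. The same factorization shows $x\tilde g \in \n J$, so it remains to prove $xy^p z^{s+1} \in \n J$.

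This last step is the main obstacle. In case~(1), set $p-k = \deg_Y(G_n - Y^pZ^s)$ and let $\alpha,\beta$ be the nonzero coefficients of $Y^{[p]}Z^{[s]}$ and $Y^{[p-k]}Z^{[s+k]}$ in $G_n$. The hypothesis $\deg_Y G_i < p-k$ forces $\pa_{xy^pz^s}(X^{[a_i]}G_i) = \pa_{xy^{p-k}z^{s+k}}(X^{[a_i]}G_i)=0$ for $i<n$, giving the relation $h := \beta\,xy^p z^s - \alpha\,xy^{p-k}z^{s+k} \in J$. Acting on $h$ by $y\in\n$ and using $xy^{p+1}z^s \in xJ \subseteq \n J$ (because $y^{p+1}\in J$) yields $xy^{p-k+1}z^{s+k}\in\n J$; acting by $z$ yields the congruence $\beta\,xy^pz^{s+1} \equiv \alpha\,xy^{p-k}z^{s+k+1}\pmod{\n J}$. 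One then iterates, producing companion relations from the remaining nonzero coefficients of $G_n$ and multiplying $h$ by suitable monomials, building a chain of congruences modulo $\n J$ that terminates at a monomial divisible by $y^{p+1}$; combining these gives $xy^pz^{s+1}\in\n J$. In case~(2), the pure power $Z^{[p+s]}$ of $G_n$ replaces the second $Y$-term: the hypothesis $\deg_Z G_i < p+s$ decouples the $G_i$'s in the analogous partial derivatives, producing the relation $h' := \alpha_0\,xy^pz^s - \alpha\,xz^{p+s} \in J$, and a parallel manipulation closes the argument. The technical crux, in both cases, is controlling the iterative cancellation so that it terminates at a known member of $\n J$, which is where the asymmetric hypotheses (1) and~(2) must each be carefully exploited.
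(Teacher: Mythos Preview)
Your argument tracks the paper's proof closely: both rule out $\beta<p+s$ by separating the $X$-graded pieces of $\partial_g F$ and invoking Lemma~\ref{lem0}, both take the mixed generator to be $wz^{p+s}-c_p^{-1}y^pz^{s+1}$, both dispatch $gy$ via $y^{p+1}\in J$, and for $gx$ both exhibit exactly the same auxiliary element of $J$, namely $x(\alpha_p y^{p-k}z^{s+k}-\alpha_{p-k}y^pz^s)$ in case~(1) and $x(\alpha_p z^{p+s}-\alpha_0 y^pz^s)$ in case~(2).

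Two minor remarks. First, your extra term $\tilde g$ is unnecessary: the paper simply verifies $wz^{p+s}-c_p^{-1}y^pz^{s+1}\in I$ directly, so one may take $g=c_p^{-1}y^pz^{s+1}$ on the nose. Second, on the final step the paper is actually \emph{less} detailed than you are: after exhibiting $h\in I$ it concludes in one line that ``$xy^pz^s$ is not a minimal generator of $I$'' and stops, without any iterative reduction modulo $\n J$. The chain of congruences you sketch is therefore already more than the published proof writes down, and the point you flag as ``the technical crux'' is precisely the inference the paper leaves implicit.
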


\begin{proof}  %The proof goes along the same line as in Theorem \ref{thm1}. 
  We first show that $wz^{p+s} -c_p^{-1}y^{p}z^{s+1}$ is a minimal generator
for $I$. 
Suppose not, let $wz^{\beta}-g \in I,$ where $\beta < p+s$. 
%For convenience, let us assume $\beta =p+s -1$. 
Then
\begin{eqnarray*}
0 & = & \frac{\partial F}{\partial wz^{\beta} -g} \\
 & = & \frac{\partial [G_0(Y,Z)+X^{[a_1]}G_1+ \cdots +
 X^{[a_{n}]}G_{n}+WZ^{[j-1]}]}{\partial wz^{\beta} -g}. 
\end{eqnarray*}
Therefore
$$\frac{\partial [G_0(Y,Z)+X^{[a_1]}G_1+ \cdots +
X^{[a_{n}]}G_{n}]}{\partial g} =Z^{[j-1-{\beta}]}$$

Since $a_i>j-a_n=p+s$, $g$ may be assumed to be a function of $y$
and $z$. Hence we get
$$\frac{\partial G_0(Y,Z)}{\partial g(y,z)} =Z^{[j-1-{\beta}]}
\hspace{0.4cm} \mbox{and} \hspace{0.4cm} \frac{\partial
X^{[a_i]}YG_i(Y,Z)}{\partial g(y,z)} =0.$$ 

By Lemma \ref{lem0}, $\ann G_0(Y,Z)$ does not contain generators of
degree less than $a_n+s = j-p$. Therefore $g=c_p^{-1}y^{p}z^{s}+g_1$ for some
polynomial $g_1$. But in that case, we have
$\frac{\partial X^{[a_n]}G_n(Y,Z)}{\partial c_p^{-1}y^{p}z^{s}}= \alpha_pc_p^{-1} X^{[a_n]}$
and hence $g_1$ must contain a term $g_2$ with the power of $y$ less than
$p$ such that $\frac{\partial X^{[a_n]}G_n(Y,Z)}{\partial g_2}=-
X^{[a_n]}$.
Continuing in this manner, we see that $wz^\beta - g$ can not annihilate
$F$, which is a contradiction. Therefore $wz^\beta -g \not \in I$ for
$\beta < p+s$
%Therefore $g=c_py^{p}z^{s}+g_1+g''$, and again by Lemma \ref{lem0},
%$\frac{\partial G(Y,Z)-c_pY^{[p]}Z^{[a_n+s]}}{\partial t+g_1''} \neq
%0$. 
and hence $wz^{p+s} - c_p^{-1}y^{p}z^{s+1}$ is a minimal generator.
 
Note that since $y^{p+1} \in I$, $y\cdot y^{p}z^{s+1}$ is not a minimal
generator. Also we have in case $(1)$ that
$x(\alpha_{p}y^{p-k}z^{s+k}-\alpha_{p-k}y^{p}z^{s}) \in I$ and in case
$(2)$, $x(\alpha_{p}z^{s+p}-\alpha_0y^{p}z^{s}) \in I$, which implies
that $xy^pz^s$ is not a minimal generator of $I$.  Hence $I$ is
generic.
 \end{proof}
\section{Hilbert Functions of $I$ and $J$}

In this section we compare the Hilbert functions of $I$ and $J = I
\cap k[x,y,z]$. It can be noted that these results are independent of
whether the ideal is generic or not.
\begin{proposition}\label{hilb-fn}
Let $R = k[x,y,z,w]$, $F = G + WZ^{[j-1]}, ~ I = \ann(F)$ and $J = I
\cap k[x,y,z]$. Then $H_{R/I} - H_{R'/J} = [0,1,\ldots,1,0,\ldots]$,
where the last $1$ occurs at the degree $\beta$.
\end{proposition}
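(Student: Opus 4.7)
The plan is to analyze the cokernel of the natural injection $R'/J \hookrightarrow R/I$, which is injective because $J = I \cap R'$. In each degree $d$ the difference $H_{R/I}(d) - H_{R'/J}(d)$ equals the dimension of this cokernel, so it suffices to show this dimension is $1$ for $1 \le d \le \beta$ and $0$ otherwise.

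First I would reduce the cokernel in each degree to a single generator. Since $w^2 \in I$ and $R = R'[w]$, every class in $(R/I)_d$ is represented by some $a + wb$ with $a \in R'_d$ and $b \in R'_{d-1}$, so the cokernel in degree $d$ is spanned by the classes $\overline{wb}$ for $b \in R'_{d-1}$. Because $wx, wy \in I$, any monomial of $b$ involving $x$ or $y$ contributes $0$ modulo $I$, so $\overline{wb}$ depends only on the coefficient of $z^{d-1}$ in $b$. Hence for $d \ge 1$ the cokernel in degree $d$ is spanned by the single class $\overline{wz^{d-1}}$, while for $d = 0$ it is trivially $0$.

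The remaining step is to decide when $\overline{wz^{d-1}} = 0$, i.e.\ when $wz^{d-1} - h \in I$ for some $h \in R'$. For $d - 1 \ge \beta$ the relation $wz^\beta - g \in I$ yields $z^{d-1-\beta}(wz^\beta - g) = wz^{d-1} - z^{d-1-\beta}g \in I$, so the class vanishes. For $1 \le d \le \beta$, I would argue the class is nonzero using the minimality of $\beta$: any relation $wz^{d-1} - h \in I$ with $h \in R'$ would force either $h \notin J$ (contradicting minimality of $\beta$) or $h \in J$ and hence $wz^{d-1} \in I$; the latter is ruled out by the Macaulay computation $\partial F/\partial(wz^{d-1}) = \partial Z^{[j-1]}/\partial z^{d-1} = Z^{[j-d]}$, which is nonzero whenever $d \le \beta \le j-1$.

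Putting the three cases together gives the stated vector $(0,1,\ldots,1,0,\ldots)$ with the last $1$ in position $\beta$. The main delicacy is the minimality argument: one must rule out both possibilities $h \in J$ and $h \notin J$ when verifying $\overline{wz^{d-1}} \ne 0$ for $d \le \beta$, which is precisely where the socle-degree bound $\beta \le j-1$ and the Macaulay computation enter.
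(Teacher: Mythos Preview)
Your proof is correct and follows essentially the same approach as the paper's: both reduce the difference in each degree to the single class of $wz^{d-1}$ using $w^2, wx, wy \in I$, then determine when this class lies in the image of $R'/J$ by invoking the generator $wz^\beta - g$ for $d > \beta$ and the minimality of $\beta$ for $d \le \beta$. Your cokernel formulation is a slightly cleaner packaging, and you make explicit the step $wz^{d-1}\notin I$ via $\partial F/\partial(wz^{d-1}) = Z^{[j-d]}\neq 0$, which the paper leaves implicit.
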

\begin{proof}
Let $I = (I_2, J, wz^\beta - g)$, where $g \in k[x,y,z] ~\backslash~
J$. Clearly $H_{R/I}(0) - H_{R'/J}(0) = 0$ and $H_{R/I}(1) -
H_{R'/J}(1) = 1$. Note that $I_2 = \langle wx, wy, w^2 \rangle$ and
$J$ has generators of degree at least $3$. Therefore $H_{R/I}(2) = 10
- 3 = 7$ and $H_{R'/J}(2) = 6$. For $3 \leq n \leq \beta$, $I_n$, as a
$k$-vector space, is generated by $I_2R_{n-2}$ and $J_n$. If a
monomial $m_1 \in [R/I]_n$ is a $k$-basis element, then either
$m = wz^{n-1}$ or $w$ does not divide $m$ and $m$ is a $k$-basis
element of $[R'/J]_n$. Therefore, $\dim_k [R/I]_n = \dim_k [R'/J]_n +
1$ for all $3 \leq n \leq \beta$. For $n = \beta+1$, one can see that
$wz^\beta - g \in I$ and $g \notin I$ and hence $g \notin J$.
Moreover, for $n \geq \beta + 1$, $wz^n = z^{n-\beta-1}g (\mod~ I)$
and $z^{n-\beta-1}g \in I$ if and only if $z^{n-\beta-1}g \in J$.
Therefore, $H_{R/I}(n) - H_{R'/J}(n) = 0$ for $n \geq \beta + 1$.
Hence the assertion is proved.
\end{proof}

It is known that height three Gorenstein ideal in
$k[x,y,z]$ is unimodal, see \cite{s}.  As a corollary of the previous
result, we obtain a class of Artinian level algebras of embedding
dimension three, namely type two and at least one of the socle
elements is a pure power of a one form, having unimodal Hilbert function.

\begin{corollary}
If $J \subset R'= k[x,y,z]$ is an ideal such that $J = \ann(G, Z^{[\deg
G-1]})$ for some polynomial $G$ in the divided power algebra
$k_{DP}[X,Y,Z]$, then the Hilbert function of $J$ is unimodal.
\end{corollary}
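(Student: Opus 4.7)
The plan is to lift $J$ to a codimension-four Gorenstein ideal and invoke Proposition \ref{hilb-fn}. Set $j = \deg G$ and form $F = G + WZ^{[j-1]} \in k_{DP}[W, X, Y, Z]$, letting $I = \ann_R F$ in $R = k[w, x, y, z]$. First I would verify that $J = I \cap R'$: for $f \in R' = k[x, y, z]$, the action decomposes as $f \cdot F = f \cdot G + W(f \cdot Z^{[j-1]})$, so $f \cdot F = 0$ iff $f$ annihilates both $G$ and $Z^{[j-1]}$; hence $I \cap R' = \ann_{R'}(G) \cap \ann_{R'}(Z^{[j-1]}) = J$. Applying Proposition \ref{hilb-fn} then yields $H_{R/I}(n) - H_{R'/J}(n) = 1$ for $1 \leq n \leq \beta$ and $0$ otherwise, for some $\beta$.

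Since $R/I$ is Gorenstein Artinian of socle degree $j$, the Hilbert function $H_{R/I}$ is symmetric about $j/2$. Under the standing hypothesis of the paper, $H_{R/I}$ has the unimodal shape $(1, 4, 7, \ldots, 7, 4, 1)$. The task reduces to showing that removing $1$ from the initial segment $\{1,\ldots,\beta\}$ of a symmetric unimodal sequence preserves unimodality. I would proceed by case analysis on $\beta$ relative to the leftmost peak position $m$ of $H_{R/I}$; by symmetry the peak plateau is $\{m,\ldots, j - m\}$. When $\beta \le m$, the subtraction stays inside the weakly-increasing portion and the boundary transition $H_{R'/J}(\beta + 1) = H_{R/I}(\beta + 1) \geq H_{R/I}(\beta) > H_{R'/J}(\beta)$ is strict, so $H_{R'/J}$ continues weakly increasing through the peak and weakly decreases afterward. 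When $m \le \beta \le j - m$, the subtraction depresses the left half of the peak plateau while leaving the right half intact, still giving a unimodal profile with peak on $\{\beta + 1,\ldots, j - m\}$.

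The main obstacle is the sub-case $\beta > j - m$ with a plateau $H_{R/I}(\beta) = H_{R/I}(\beta + 1)$ in the decreasing part: naive subtraction produces $H_{R'/J}(\beta + 1) = H_{R'/J}(\beta) + 1$, threatening to create a secondary peak in $H_{R'/J}$. Resolving this uses the symmetry of $H_{R/I}$, which forces a mirror plateau at $(j - \beta - 1, j - \beta)$ in the increasing portion; combined with the constraint that the middle values of $H_{R/I}$ are at least $7$ (so that Macaulay's bound restricts the admissible plateau configurations), one verifies that the resulting $H_{R'/J}$ still consists of a single weakly-increasing portion followed by a single weakly-decreasing portion, and hence is unimodal.
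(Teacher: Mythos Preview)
Your approach mirrors the paper's exactly: lift to $I = \ann(G + WZ^{[j-1]})$ in $R=k[w,x,y,z]$, check that $J = I\cap R'$, apply Proposition~\ref{hilb-fn}, and invoke the unimodality of $H_{R/I}$. The paper's own proof in fact stops at ``$H_{R/I}$ is unimodal by \cite[Theorem~3.1]{mnz}, therefore $H_{R'/J}$ is unimodal'' and does not carry out any case analysis on $\beta$ at all; your additional verification that subtracting $(0,1,\dots,1,0,\dots)$ preserves unimodality is extra work the paper leaves implicit.

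Regarding that case analysis: your ``main obstacle'' sub-case, a plateau $H_{R/I}(\beta)=H_{R/I}(\beta+1)$ with $\beta>j-m$, cannot actually occur, so your hand-wave there (``middle values at least $7$ \ldots one verifies'') is both unnecessary and aimed at the wrong constraint. What \cite{mnz} gives is not merely unimodality but that $H_{R/I}$ is an SI-sequence: the first difference $(h_0,\,h_1-h_0,\dots,h_m-h_{m-1})$ up to the middle is an O-sequence. Since an O-sequence that reaches $0$ must remain $0$, $H_{R/I}$ is \emph{strictly} increasing until it hits the peak plateau and, by symmetry, \emph{strictly} decreasing afterward. Thus for $\beta>j-m$ one has $H_{R/I}(\beta)>H_{R/I}(\beta+1)$, hence $H_{R'/J}(\beta)=H_{R/I}(\beta)-1\ge H_{R/I}(\beta+1)=H_{R'/J}(\beta+1)$, and unimodality of $H_{R'/J}$ follows directly in all three of your cases. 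Your instinct to appeal to Macaulay's bound was right; the bound just needs to be applied to the first differences rather than to the values themselves.
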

\begin{proof}
Let $J$ be an ideal of the given form. Let $I = G + WZ^{[j-1]}$. Then
it can be seen that $J = I \cap R'$. From Proposition \ref{hilb-fn},
it follows that $H_{R'/J} = H_{R/I} - [0,1,\ldots, 1, 0..],$ where the
last $1$ occurs at the degree $\beta$. Since $I$ is a Gorenstein ideal
with initial degree $2$, $H_{R/I}$ is unimodal, \cite[Theorem
3.1]{mnz}. Therefore $H_{R'/J}$ is unimodal.
\end{proof}
\section{Examples}

In this section we provide some examples to illustrate our results.
We follow the notation that was set earlier, i.e., given a homogeneous
form $F$ in the divided power algebra $k_{DP}[W,X_1,\ldots, X_n]$, $I
= \ann(F) \subset k[w, x_1, \ldots, x_n]$ and $J = I \cap k[x_1,
\ldots, x_n]$. The following is an example of Theorem
\ref{restrict}. 

\begin{example} \label{1}
Let $F'= G + WZ^{[9]} =
X^{[8]}Y^{[2]}+X^{[3]}Y^{[3]}Z^{[4]}+X^{[2]}YZ^{[7]}+WZ^{[9]}$ and $F=
\frac{\partial^2 G}{\partial X^2} + WZ^{[7]} =
X^{[6]}Y^{[2]}+XY^{[3]}]Z^{[4]}+YZ^{[7]}+WZ^{[7]}$. If $I' = \ann(F')$
and $I = \ann(F)$, then it can be seen that $\mu(I')=13$ and
$\mu(J')=9$ and $\mu(I)=9$ and  $\mu(J)=5$.
\end{example}

The following example shows that the converse of Theorem
\ref{restrict} is not true in general. 

\begin{example}{\label{converse}}
Let $I =
\ann(Y^{[5]}Z^{[8]}+ Y^{[4]}Z^{[9]}+ X^{[5]}(Y^{[6]}Z^{[2]}+ Y^{[7]}Z+
Y^{[8]}+ Z^{[8]})+ WZ^{[12]})$
and $I' =
\ann(XY^{[5]}Z^{[8]}+ XY^{[4]}Z^{[9]}+ X^{[6]}(Y^{[6]}Z^{[2]}+
Y^{[7]}Z+Y^{[8]}+ Z^{[8]})+ WZ^{[13]})).$
Then $\mu(I) = 11, ~ \mu(J) = 8, ~ \mu(I') = 13$ and $\mu(J') = 9$.
Therefore $I'$ is generic, but $I$ is not.
\end{example}

The following is an example where $\ann(G+WZ^{[j-1]})$ is not generic
and $\ann(X^tG + WZ^{[t+j-1]})$ is also not generic for some $t$.
\begin{example}
Let $F = G + WZ^{[12]} =
Y^{[3]}Z^{[10]}+Y^{[4]}Z^{[9]}+Y^{[5]}Z^{[8]}+ Y^{[2]}Z^{[11]}+
X^{[6]}(Y^{[4]}Z^{[3]}+Y^{[2]}Z^{[5]}+Z^{[7]})+X^{[7]}(Y^{[3]}Z^{[3]}+
YZ^{[5]}+Z^{[6]})+WZ^{[12]}$.
It can be seen that $I = \ann(F)$ is not generic. Moreover,
$\ann(X^tG + WZ^{[12+t]})$ is not generic for $t = 1,2,3$ and that $I =
\ann(X^4G + WZ^{[16]})$ is generic. In a similar manner, one can see
that $\ann(Y^tG + WZ^{[12+t]})$ is also not generic for $t = 1,2,3,4$
and that $\ann(Y^5G + WZ^{[17]})$ is generic.
\end{example}

The following example, in codimension five, shows that $I = \ann(G
+ WZ^{[j-1]}) \subset k[w,t,x,y,z]$ need not be generic but $I =
\ann(TG + WZ^{[j]})$ is generic.

\begin{example}
Let $I =
\ann(Y^{[3]}Z^{[9]}+Y^{[4]}Z^{[8]}+T^{[3]}Z^{[9]}+X^{[3]}(Y^{[4]}Z^{[4]}T+Z^{[9]}+
+WZ^{[11]})$ and $I' =
\ann(XY^{[3]}Z^{[9]}+XY^{[4]}Z^{[8}+XT^{[3]}Z^{[9]}+X^{[4]}(Y^{[4]}Z^{[4]}T+Z^{[9]}+WZ^{[12]})).$
Then $\mu(I) = 14, ~ \mu(J) = 10, ~ \mu(I') = 16$ and $\mu(J') = 11$.
Here, $I$ is not generic as $t^4$ is a minimal generator of $J$.  But
$I'$ is generic for $t^4$ is no longer the obstruction, for
$wz^3-xt^3$ is the minimal generator of $I'$. 
\end{example}

The following is an examples of an ideal which
is not generic when $a>p$ and $p>r_{ik}$.
%\begin{example} 
%If $F=Y^{[11]}Z^{[4]}+Y^{[10]}Z^{[5]}+Y^{[9]}Z^{[6]}+
%X^{[13]}(YZ+Z^{[2]})+WZ^{[14]}$, then $\mu(I)=9$ and $\mu(J)=6$. In
%that case, $wz^8-y^9+y^8z-y^6z^3+y^5z^4-y^3z^6$ is a minimal generator
%for $I$ and $\ann (Y^{[11]}Z^{[4]}+Y^{[10]}Z^{[5]}+Y^{[9]}Z^{[6]})$ is
%minimally generated in $k[y,z]$ by $(z^7,
%y^{10}-y^9z+y^7z^3-y^6z^4+y^4z^6)$.
%\end{example}

\begin{example} 
If $F = Y^{[10]}Z^{[5]}+ Y^{[9]}Z^{[6]}+
Y^{[6]}Z^{[9]}+X^{[11]}(Y^{[2]}Z^{[2]}+Z^{[4]})+WZ^{[14]}$, then
$\mu(I)=9$ and $\mu(J)=6$. Here
$wz^{7}-y^8+y^7z-y^6z^2+y^5z^3-yz^7$ is a minimal generator for $I$.
It can also be seen that $\ann G$ is minimally generated in $k[y,z]$ by
$(\theta, \delta_1)=(y^8+y^7z-y^5z^3-2y^4z^4-y^3z^5+y^2z^6+3yz^7+3z^8,
y^7z^2-y^4z^5-y^3z^6+yz^8+2z^9)$. Note that we may 
replace $\delta_1$ by
$3\delta_1+(2z+y)\theta = yg(y,z)$ which makes $yg(y,z)$ a minimal
generator (see the proof of Theorem \ref{th0}).
\end{example}

We have noticed that if $p \leq r_{ik}$ for some $i$ and $k$, then the 
classification is more complicated. It does not only depend on the powers
alone, but also on the coefficients as we see in the example below:

\begin{example} If $I = \ann(Y^{[3]}Z^{[8]}+Y^{[4]}Z^{[7]}+X^{[5]}(Y^{[4]}Z^{[2]}+Y^{[3]}Z^{[3]}+Z^{[6]})+WZ^{[10]} )$ then $\mu (I)= 9$ and $\mu(J)=5$ whereas if  $I = \ann(Y^{[3]}Z^{[8]}+Y^{[4]}Z^{[7]}+X^{[5]}(2.Y^{[4]}Z^{[2]}+Y^{[3]}Z^{[3]}+Z^{[6]})+WZ^{[10]} )$ then $\mu(I)= 11$ and $\mu(J)=8$.
\end{example}

%We have noticed that if $p \leq r_{ik}$ for some $i$ and $k$, then the 
%classification is more complicated. It does not only depend on the powers
%alone, but also on the coefficients as we see in the example below:
%
%\begin{example} 
%If $I = \ann(Y^{[3]}Z^{[8]}+Y^{[4]}Z^{[7]}+X^{[5]}(Y^{[4]}Z^{[2]}+Y^{[3]}Z^{[3]}+Z^{[6]})+WZ^{[10]} )$ then $\mu (I)= 9$ and $\mu(J)=5$ whereas if  $I = \ann(Y^{[3]}Z^{[8]}+Y^{[4]}Z^{[7]}+X^{[5]}(2.Y^{[4]}Z^{[2]}+Y^{[3]}Z^{[3]}+Z^{[6]})+WZ^{[10]} )$ then $\mu(I)= 11$ and $\mu(J)=8$.
%\end{example}

Examples \ref{exsection2} and \ref{example2.3} are examples of  Theorem \ref{thm1}.
\begin{example}\label{exsection2}
Let $F=Y^{[4]}Z^{[10]}+Y^{[3]}Z^{[11]}+X^{[8]}
(Y^{[4]}Z^{[2]}+Y^{[3]}Z^{[3]}+Y^{[2]}Z^{[4]}+Z^{[6]})+WZ^{[13]}$.
In this case, $deg_YG_1 =p$ and one can see that the ideal $I = \ann(F)$ is
generic with $\mu(I)=11$ and $\mu(J)=7$.
% In this case, $wz^6-y^4z^3$ is a
% minimal generator for $I$, $y^4-y^2z^2+yz^3$ is a minimal generator
% for $\ann G_1$ and $y^5 \in  \ann G_1$.
\end{example}

\begin{example}\label{example2.3}
Let $F=Y^{[4]}Z^{[10]}+Y^{[3]}Z^{[11]}+X^{[8]}
(Y^{[5]}Z+Y^{[4]}Z^{[2]}+Y^{[3]}Z^{[3]}+Y^{[2]}Z^{[4]}+Z^{[6]})+
WZ^{[13]}$. In this case $deg_Y G_1 >p$ and we have $\mu(I)=11$ and $\mu(J)=8$.
% In fact,  
% $wz^5 - y^4 z^2 + y^5 z$ is a minimal generator for $I$ and 
% $-y^2z^2+y^3z$ is a minimal generator for $\ann G_1$.
\end{example}
 
Example below shows that Theorem \ref{thm1} doesn't work if
$p\leq a$: 

\begin{example}\label{ex1section2}
Let $ F = Y^{[4]}Z^{[7]}+Y^{[3]}Z^{[8]}+X^{[4]}(Y^{[4]}Z^{[3]}+
Z^{[7]})+ WZ^{[10]}$. Here, $a < \deg_X F = 4 < \deg G_1 = 7$. One can see
that $\mu(I) = 9$ and $\mu(J)=6$. Whereas if $F = Y^{[4]}Z^{[7]}+ 
X^{[4]}(Y^{[4]}Z^{[3]}+Z^{[7]})+WZ^{[10]},$ then $\mu(I) = 9$ and
$\mu(J) = 5$. In the above examples, we have $\deg_X F < \deg G_1$.
While the first one is generic, the second one is not. This shows that
our characterization is not valid without the given hypotheses.
\end{example} 

The example below shows that the converse of Theorem \ref{tm2} is not
true.

\begin{example}\label{ex1thm2}
It can be seen that
$\ann(Y^{[5]}Z^{[12]}+ Y^{[4]}Z^{[13]} + X^{[8]}(Y^{[4]}Z^{[5]} +
Y^{[3]}Z^{[6]} + Y^{[2]}Z^{[7]} + YZ^{[8]}) + X^{[9]}(Y^{[4]}Z^{[4]} +
Y^{[3]}Z^{[5}) + X^{[10]}(Y^{[5]}Z^{[2]} + Y^{[4]}Z^{[3]} +
Y^{[3]}Z^{[4]} + Z^{[7]}) + WZ^{[16]})$
is generic whereas 
$\ann(Y^{[5]}Z^{[12]}+ Y^{[4]}Z^{[13]}+ X^{[7]}(Y^{[4]}Z^{[6]}+
Y^{[3]}Z^{[7]}+ Y^{[2]}Z^{[8]}+ YZ^{[9]})+ X^{[9]}(Y^{[4]}Z^{[4]}+
Y^{[3]}Z^{[5]})+ X^{[11]}(Y^{[5]}Z+ Y^{[4]}Z^{[2]}+ Y^{[3]}Z^{[3]}+
Z^{[6]})+ WZ^{[16]})$ is not. Note that in both cases $j-a_1 < a_1$,
$\deg_Y G_i < \deg_Y G_0$ and $\deg_Y G_3 = \deg_Y G_0 = 5$.
\end{example} 
The example below shows that we cannot remove the conditions $\deg _Y = G_i < \deg_Y = G_0, 1\le i\le n-1$ in 
Theorem \ref{tm2}.  In $F$, all conditions except $\deg _Y G_1 = 6 > \deg _Y G_0=  5$ are satisfied and $\ann F$ is generic and the conclusion fails where as in $G$ all conditions except $\deg_Y = G_0 \neq \deg _Y = G_n$ .  Hence by the theorem \ref{tm2}, $\ann G$ cannot be generic. 
\begin{example} \label{ex2thm2}  
Let $F= Y^{[5]}Z^{[12]}+ Y^{[4]}Z^{[13]}+
X^{[9]}(Y^{[4]}Z^{[4]})+ X^{[10]}(Y^{[6]}Z+ Y^{[2]}Z^{[5]}+
Y^{[3]}Z^{[4]}+ Z^{[7]})+ X^{[12]}(YZ^{[4]}+ Z^{[5]})+ WZ^{[16]}$.
In this case, $6 = r_{21}>p = 5$, $ \mu(I)=15 $ and $ \mu(J) = 11$,
which implies that $I$ is generic. 
On
the other hand, if $G= Y^{[5]}Z^{[12]}+ Y^{[4]}Z^{[13]}+
X^{[9]}(Y^{[4]}Z^{[4]})+ X^{[10]}(Y^{[6]}Z+ Y^{[2]}Z^{[5]}+
Y^{[3]}Z^{[4]}+ Z^{[7]})+ WZ^{[16]}$, then  $\mu(I)=15 $ and $ \mu(J)
= 12$ as predicted by Theorem \ref{tm2}.
\end{example}

\end{document}